\newtheorem{theorem}{Theorem}[section]
\newtheorem{lemma}[theorem]{Lemma}
\newtheorem{proposition}[theorem]{Proposition}
\theoremstyle{definition}
\DeclareMathOperator{\Bip}{p_2}
\theoremstyle{remark}
\newcommand{\nc}{\newcommand}
\nc{\Z}{\mathbb Z}
\nc{\B}{\mathbb B}
\nc{\D}{\mathbb D}
\nc{\mb}{\mathbb}
\nc{\irr}{\mathrm{Irr}}
\numberwithin{equation}{section}
\title{Divisibility of Character Values of Representations of Coxeter Groups}
\author{Jyotirmoy Ganguly, Rohit Joshi}
\makeatletter\@addtoreset{chapter}{part}\makeatother%
\address{MURTI Research Center, GITAM (Deemed to be University),\\
	NH 207, Nagadenahalli, Doddaballapura Taluk, Bengaluru Rural,\\
	Karnataka - 562163, India.}
\email{jyotirmoy.math@gmail.com}  
\address{IISER Pune, Dr. Homi Bhabha Road,Pune-411008, Maharashtra, India} \email{rohitsj@students.iiserpune.ac.in} 
\begin{document}
	
\begin{abstract}
Let $d$ be a positive integer. We study the proportion of irreducible characters of infinite families of irreducible Coxeter groups whose values evaluated on a fixed element $g$ are divisible by $d$. For Coxeter groups of types $A_n, B_n$ and $D_n$, the proportion tends to $1$ as $n$ approaches infinity. For Dihedral groups, which are Coxeter groups of type $I_2(n)$, we compute the limit of the proportion. 
\end{abstract}		
	
	\maketitle
	\tableofcontents

\section{Introduction}	

%
%
%
%

Consider a chain of finite groups
\begin{center}
	\begin{tikzpicture}
		\node (A1) at (-.5,0) {$\mathscr{C}(G_n, G_0)$};
		\node (A2) at (.4,0) {$:$};
		\node (A3) at (1,0) {$G_0$};
		\node (A4) at  (2.3,0) {$G_1$};
        \node (A5) at  (3.6,0) {$G_2$};
        \node (A6) at  (4.9,0) {$\cdots$};		
        \node (A7) at  (6.2,0) {$G_n$};		
        \node (A8) at  (7.8,0) {$\cdots$};
		\draw[->](A3) to node [above]{$j_1$} (A4);
		\draw[->](A4) to node [above]{$j_2$} (A5);
		\draw[->](A5) to node [above]{$j_3$} (A6);
		\draw[->](A6) to node [above]{$j_n$} (A7);
		\draw[->](A7) to node [above]{$j_{n+1}$} (A8);
	\end{tikzpicture}
\end{center}
with injective group homomorphisms $j_k$, for $k\in \mathbb{N}$. We define $\iota_n=j_n\circ j_{n-1}\circ\cdots\circ j_1$. Write $\irr(G)$ for the set of irreducible representations of a finite group $G$. For $\pi$ a representation of $G$, let $\chi_{\pi}(g)$ denote the character value of $\pi$ evaluated on an element $g$ of $G$.
For $g\in G_0$ and $d\in \mathbb{N}$, we define
\begin{equation}
\mathscr{L}(\mathscr{C}(G_n, G_0),g,d)=\underset{n\rightarrow \infty}{\lim}\frac{\#\{\pi\in\irr(G_n)\mid \chi_{\pi}(\iota_n(g))\,\text{is divisible by}\,\, d\}}{|\irr(G_n)|},
\end{equation}
where we say $\chi_{\pi}(\iota_n(g))$ is divisible by $d$ whenever the algebraic number $\chi_{\pi}(\iota_n(g))/d$ is an (algebraic) integer.
The statistic $\mathscr{L}(\mathscr{C}(G_n, G_0),g,d)$ measures the proportion of character values of $G_n$ divisible by a fixed positive integer $d$ as $n$ approaches infinity. If $\mathscr{L}(\mathscr{C}(G_n, G_0),g,d)=1$, then we say that $100\%$ of the character values of $G_n$ are divisible by $d$ as $n\rightarrow\infty$.
The divisibility of character values for Symmetric groups were studied in \cite{gps}. Analogous study for the general linear groups over finite fields can be found in \cite{ss}.

In this article, we present a study of divisibility of character values of representations of irreducible Coxeter groups. Since our interest lies in the computation of the statistic $\mathscr{L}(\mathscr{C}(G_n, G_0),g,d)$, we consider only the infinite families of Coxeter groups. They are of types $A_n, B_n, D_n$ and $I_2(n)$ (see \cite{bourlie}).

It is well known that the character values of representations of Coxeter groups of types $A_n, B_n, D_n$ (Weyl Groups) are always integers [See \cite[Page 180]{hum}]. The divisibility results rely on a more general fact namely, for a finite group $G$ and $\pi \in \irr(G)$, $$\dfrac{\chi_\pi(g) \cdot [G : Z_G(g)]}{\dim \pi},$$
is an algebraic integer. Here $Z_G(g)$ is the centralizer of $g$ in $G$. For a reference see \cite[Exercise $6.9$]{ser}. For the case of the groups of type $I_2(n)$ we use some different techniques.


For a fixed positive integer $k$, consider the chain of Hyperoctahedral groups 
\begin{center}
	\begin{tikzpicture}
		\node (A1) at (-.5,0) {$\mathscr{C}(\mb {B}_{n}, \mb{B}_k)$};
		\node (A2) at (.4,0) {$:$};
		\node (A3) at (1,0) {$\mb B_k$};
		\node (A4) at  (2.5,0) {$\mb B_{k+1}$};
		\node (A5) at  (4.2,0) {$\mb B_{k+2}$};
		\node (A6) at  (6,0) {$\cdots$};		
		\node (A7) at  (7.2,0) {$\mb B_{n}$};		
		\node (A8) at  (8.8,0) {$\cdots$};
		\draw[->](A3) to node [above]{$j_{k+1}$} (A4);
		\draw[->](A4) to node [above]{$j_{k+2}$} (A5);
		\draw[->](A5) to node [above]{$j_{k+3}$} (A6);
		\draw[->](A6) to node [above]{$j_n$} (A7);
		\draw[->](A7) to node [above]{$j_{n+1}$} (A8);
	\end{tikzpicture}
\end{center}
Here $j_i : \mb B_{i-1} \to \mb B_{i}$ is the usual inclusion. (see Section \ref{conjB}.)

\begin{theorem}\label{mt}
	For any positive integers $d$ and $k$ and an element $g\in \mb{B}_k$, we have  
	$$\mathscr{L}(\mathscr{C}(\mb {B}_{n}, \mb{B}_k), g,d)=1.$$
\end{theorem}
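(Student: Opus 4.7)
The plan is to reduce the theorem, via the algebraic-integer criterion cited from \cite[Exercise $6.9$]{ser}, to a dimension-divisibility question for irreducibles of $\mb{B}_n$, and then to invoke the symmetric-group divisibility result of \cite{gps} applied to the two partitions indexing such an irreducible.

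First, since any $g \in \mb{B}_k$ acts trivially on the last $n-k$ coordinates, the copy of $\mb{B}_{n-k}$ acting on those coordinates centralizes $g$, so $\mb{B}_{n-k} \subseteq Z_{\mb{B}_n}(g)$ and consequently
\[
[\mb{B}_n : Z_{\mb{B}_n}(g)] \;\Big|\; [\mb{B}_n : \mb{B}_{n-k}] \;=\; 2^k \, n(n-1)\cdots(n-k+1),
\]
which is polynomial in $n$; in particular $v_p([\mb{B}_n : Z_{\mb{B}_n}(g)]) = O(\log n)$ for every prime $p$. Because the characters of $\mb{B}_n$ are integer-valued, the algebraic-integer fact upgrades to the integer divisibility $\dim\pi/\gcd(\dim\pi,[\mb{B}_n : Z_{\mb{B}_n}(g)]) \mid \chi_\pi(g)$. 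Hence $d \mid \chi_\pi(g)$ will follow once one checks, for every prime $p \mid d$,
\[
v_p(\dim\pi) \;\geq\; v_p(d) + v_p\bigl([\mb{B}_n : Z_{\mb{B}_n}(g)]\bigr),
\]
a condition whose right-hand side is $O(\log n)$.

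I would then parameterize $\irr(\mb{B}_n)$ by bipartitions $(\lambda,\mu)$ of $n$, with $\dim\pi_{\lambda,\mu}=\binom{n}{|\lambda|}f^\lambda f^\mu$, where $f^\nu$ is the dimension of the $S_{|\nu|}$-irreducible indexed by $\nu$. The total number of bipartitions of $n$ grows strictly faster than $p(n)$ (indeed like $\exp(2\pi\sqrt{n/3})$), while the count with $\min(|\lambda|,|\mu|)\le N$ is only $O(p(n))$ for fixed $N$; so for $100\%$ of bipartitions, both $|\lambda|$ and $|\mu|$ tend to infinity with $n$. Applying the symmetric-group result of \cite{gps} in a quantitative form to $\lambda$ and to $\mu$ separately then produces $v_p(f^\lambda) + v_p(f^\mu)$ exceeding the required $O(\log n)$ threshold for $100\%$ of bipartitions. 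Combining this over the finitely many primes $p \mid d$ yields the theorem.

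The main technical obstacle is calibrating the quantitative form of the symmetric-group input. The result in \cite{gps} is typically phrased for a fixed divisor, whereas our sufficient condition involves the divisor $d \cdot [\mb{B}_n : Z_{\mb{B}_n}(g)]$, which depends on $n$ (though only polynomially). One therefore has to extract from the arguments of \cite{gps} a bound on the exceptional set of $\nu \vdash m$ with $v_p(f^\nu) \le T$ that shrinks fast enough to absorb a threshold $T$ growing like $\log m$. I expect this uniformity to be attainable because the $p$-adic valuations of generic $f^\nu$ grow much faster than $\log|\nu|$, but making the estimate precise---and combining it with the averaging over $|\lambda|$ in the bipartition---is the critical step.
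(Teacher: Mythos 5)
Your proposal follows essentially the same route as the paper's proof: Serre's algebraic-integer lemma combined with integrality of the characters of $\mb{B}_n$, the bound $v_q\bigl([\mb{B}_n:Z_{\mb{B}_n}(g)]\bigr)\le v_q\bigl(2^k(n)_k\bigr)=O(\log n)$, the dimension formula $\binom{n}{|\alpha|}\mathfrak{f}_\alpha\mathfrak{f}_\beta$, and the $q$-adic valuation statistics for $\mathfrak{f}_\lambda$ from \cite{gps}. The ``quantitative form'' you flag as the main technical obstacle is exactly Theorem A of \cite{gps} (the version with threshold $h+(q-1)\log n$, quoted here as Theorem \ref{asymp1}), so that step requires no new extraction; the paper also sidesteps your two-sided application to both $\alpha$ and $\beta$ by observing that $\max(|\alpha|,|\beta|)\ge n/2$, so applying that theorem to the larger constituent alone already forces $v_q(\mathfrak{f}_\alpha)+v_q(\mathfrak{f}_\beta)$ past the threshold for $100\%$ of bipartitions.
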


A similar asymptotic holds when we restrict our attention to Demi-hyperoctahedral group $\mathbb{D}_n$ (see Theorem \ref{demi}).

Let $m=m_0, m_1, m_2,\ldots$ be integers such that $m_i$ divides $m_{i+1}$ for every $i$. We have chains of Dihedral groups

\begin{center}
	\begin{tikzpicture}
		\node (A1) at (-.5,0) {$\mathscr{C}(D_{m_n},D_m)$};
		\node (A2) at (.6,0) {$:$};
		\node (A3) at (1,0) {$D_m$};
		\node (A4) at  (2.3,0) {$D_{m_1}$};
		\node (A5) at  (3.6,0) {$D_{m_2}$};
		\node (A6) at  (4.9,0) {$\cdots$};		
		\node (A7) at  (6.2,0) {$D_{m_n}$};		
		\node (A8) at  (7.8,0) {$\cdots$};
		\draw[->](A3) to node [above]{$i_1$} (A4);
		\draw[->](A4) to node [above]{$i_2$} (A5);
		\draw[->](A5) to node [above]{$i_3$} (A6);
		\draw[->](A6) to node [above]{$i_n$} (A7);
		\draw[->](A7) to node [above]{$i_{n+1}$} (A8);
	\end{tikzpicture}
\end{center}

For a detailed discussion see Subsection \ref{di}. Let the group $D_m$ be generated by a rotation $r$ by angle $2\pi/m$ and a reflection $s$. We prove the following result:

\begin{theorem}\label{dihedral}
	We have	    $$\mathscr{L}(\mathscr{C}(D_{m_n}, D_m),r^l,2) = \dfrac{\gcd(m,4l)}{m}. $$
\end{theorem}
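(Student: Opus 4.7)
The plan is to enumerate the irreducible characters of $D_M$ (where $M = m_n$), identify those for which $\chi_\pi(R^{l_n})$ is divisible by $2$, and compute the asymptotic proportion. Write $R$ for the rotation by $2\pi/M$ in $D_M$, and $l_n = l M/m$, so that $r^l \mapsto R^{l_n}$ under the inclusion $D_m \hookrightarrow D_M$ described in Subsection~\ref{di}. The irreducible representations of $D_M$ consist of two or four one-dimensional characters (depending on the parity of $M$) taking values in $\{\pm 1\}$ on rotations, together with $\lfloor (M-1)/2 \rfloor$ (respectively $(M-2)/2$) two-dimensional characters $\chi_h$ satisfying
\[
\chi_h(R^{l_n}) \;=\; 2\cos(2\pi h l_n / M) \;=\; 2\cos(2\pi h l/m),
\]
the last identity following from $l_n/M = l/m$ and being independent of $n$. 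Since the one-dimensional characters never take values divisible by $2$, the count reduces to the two-dimensional characters.

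The key arithmetic input I would isolate is the following cyclotomic lemma, a direct consequence of Kronecker's theorem: if $\omega$ is a root of unity of order $k'$, then $(\omega + \omega^{-1})/2$ is an algebraic integer if and only if $k' \in \{1, 2, 4\}$. Indeed, $(\omega+\omega^{-1})/2 = \cos(2\pi j/k')$ is a real algebraic number whose Galois conjugates are cosines $\cos(2\pi j'/k') \in [-1,1]$; if it is an algebraic integer, Kronecker's theorem forces it to be $0$ or a real root of unity, i.e.\ an element of $\{0, \pm 1\}$, which occurs precisely for $k' \in \{1,2,4\}$. Applying this with $\omega = e^{2\pi i h l/m}$ (of order $k' = m / \gcd(m, hl)$), the condition that $\chi_h(R^{l_n})/2$ is an algebraic integer becomes $k' \mid 4$, equivalently $m \mid 4hl$, equivalently $D \mid h$ for $D := m/\gcd(m, 4l)$.

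The counting is then immediate: the number of $h \in \{1, 2, \ldots, \lfloor (M-1)/2 \rfloor\}$ (or $\{1, \ldots, (M-2)/2 \}$) divisible by $D$ is $M/(2D) + O(1)$, while $|\irr(D_M)| = M/2 + O(1)$, giving
\[
\mathscr{L}(\mathscr{C}(D_{m_n}, D_m), r^l, 2) \;=\; \lim_{n \to \infty} \frac{M/(2D) + O(1)}{M/2 + O(1)} \;=\; \frac{1}{D} \;=\; \frac{\gcd(m, 4l)}{m}.
\]
The main obstacle I expect is the clean arithmetic lemma above: while Kronecker's theorem settles it in a few lines, care is needed to interpret divisibility in the full ring of algebraic integers, so that irrational cases such as $\omega + \omega^{-1} = \pm\sqrt{2}$ are excluded along with rational nonintegers like $\pm 1$. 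With the lemma in hand, the remainder of the proof is elementary counting.
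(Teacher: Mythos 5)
Your proof is correct, and its overall architecture matches the paper's: restrict attention to the two-dimensional characters (the finitely many linear characters take values $\pm1$ on rotations and are asymptotically negligible), characterize when $2\cos(2\pi hl/m)$ is divisible by $2$ in $\mathcal{O}$, reduce that condition to $m\mid 4hl$, i.e.\ $\frac{m}{\gcd(m,4l)}\mid h$, and count multiples in the range of the index $h$. Where you genuinely diverge is in the proof of the arithmetic core (the content of Proposition \ref{propai1}): the paper gets there through Lemma \ref{lemmaai} (rationality of $2\cos(2\pi/n)$ forces $n\in\{1,2,3,4,6\}$) combined with Lemma \ref{lemmaai2}, whose proof invokes the fact that $\mathbb{Z}[2\cos(2\pi/n)]$ is the full ring of integers of the real cyclotomic field (citing Washington) to rule out $\alpha_1(n)/d\in\mathcal{O}$ when $\alpha_1(n)$ is irrational. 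You instead apply Kronecker's theorem: all Galois conjugates of $\cos(2\pi j/k')$ lie in $[-1,1]$, so if it is an algebraic integer the norm argument forces it to be $0$ or a root of unity, hence in $\{0,\pm1\}$, hence $k'\in\{1,2,4\}$. Your route is more self-contained (no ring-of-integers input needed) and cleanly handles exactly the subtlety you flag, namely excluding values like $\pm\sqrt{2}$ that are irrational algebraic integers; the paper's route yields the explicit list $n'\in\{1,2,3,4,6\}$ as a reusable intermediate, which it also exploits for the $d>2$ case in Proposition \ref{propai2}. Your Kronecker lemma specializes equally well there, so nothing is lost.
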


We prove a similar result for divisibility by $d$, where $d > 2$ (see Theorem \ref{d2}). Hence we cover all infinite families of irreducible Coxeter groups. 

Here is the layout of the paper. Section $2$ presents an overview of the results in the paper \cite{gps}. In Section $3$ we prove Theorem \ref{mt}. Section \ref{demihyper} contains a similar treatment for the Coxeter groups of type $D_n$. In the final Section \ref{dihedralgroups} we deal with the case of Dihedral groups (type $I_2(n)$).


{\bf Acknowledgements:} The authors would like to thank Steven Spallone for helpful conversations. The second author of this paper was supported by a post-doctoral fellowship
of National Board of Higher Mathematics India (NBHM).	

\section{Symmetric Groups}

The symmetric group $S_n$ is the Weyl group of type $A_n$. The divisibility of the character values of $S_n$ was studied in \cite{gps}. In this section we present a review of the paper.

Consider a chain of symmetric groups

\begin{center}
	\begin{tikzpicture}
		\node (A1) at (-.5,0) {$\mathscr{C}(S_{n},S_k)$};
		\node (A2) at (.6,0) {$:$};
		\node (A3) at (1,0) {$S_k$};
		\node (A4) at  (2.3,0) {$S_{k+1}$};
		\node (A5) at  (3.6,0) {$S_{k+2}$};
		\node (A6) at  (4.9,0) {$\cdots$};		
		\node (A7) at  (6.2,0) {$S_{n}$};		
		\node (A8) at  (7.8,0) {$\cdots$};
		\draw[->](A3) to node [above]{$j_1$} (A4);
		\draw[->](A4) to node [above]{$j_2$} (A5);
		\draw[->](A5) to node [above]{$j_3$} (A6);
		\draw[->](A6) to node [above]{$j_n$} (A7);
		\draw[->](A7) to node [above]{$j_{n+1}$} (A8);
	\end{tikzpicture}
\end{center}

For an element $g\in S_{n-1}$ we define $j_n(g)$ to be the element of $S_n$ which fixes $n$. So, if $g$ has
cycle type $\mu=(\mu_1,\dotsc,\mu_m)$, then the cycle type $j_n(g)$ is $(\mu_1,\dotsc,\mu_m,1)$.
The main result of the article provides the asymptotic nature of the proportion of the irreducible characters of $S_n$ divisible by a fixed positive integer.
\begin{theorem}\cite[Main Theorem]{gps}
	For any positive integers $k$ and $d$, 
	\begin{displaymath}
		\mathscr{L}(\mathscr{C}(S_n, S_k), g,d) = 1.
	\end{displaymath}
	
\end{theorem}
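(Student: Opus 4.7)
The plan is to reduce the theorem, via the algebraic integer fact cited in the introduction, to an asymptotic statement about $p$-adic valuations of the dimensions of irreducible $S_n$-representations. Parametrize $\irr(S_n)$ by partitions $\lambda \vdash n$, write $\pi_\lambda$ for the corresponding representation with $\dim \pi_\lambda = f^\lambda$, and set $c(n) := [S_n : Z_{S_n}(\iota_n(g))]$. Since character values of $S_n$ are rational integers, the algebraic integer $\chi_{\pi_\lambda}(\iota_n(g)) \cdot c(n)/f^\lambda$ is in fact an ordinary integer, giving the divisibility $f^\lambda \mid c(n) \cdot \chi_{\pi_\lambda}(\iota_n(g))$ in $\Z$. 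Taking $p$-adic valuations (the case $\chi_{\pi_\lambda}(\iota_n(g))=0$ being trivial) yields $v_p(\chi_{\pi_\lambda}(\iota_n(g))) \geq v_p(f^\lambda) - v_p(c(n))$ for every prime $p$.

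Next I would estimate $c(n)$. Since the cycle type of $\iota_n(g)$ is obtained from that of $g$ by adjoining $n - k$ fixed points, the usual centralizer formula for the symmetric group gives
\begin{equation*}
c(n) \;=\; \frac{n! \cdot m_1(g)!}{|Z_{S_k}(g)| \cdot (n - k + m_1(g))!},
\end{equation*}
a polynomial in $n$ of degree $k - m_1(g)$. By Legendre's formula this implies $v_p(c(n)) = O(\log n)$ for every prime $p$.

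The heart of the argument, and the step I expect to be the main obstacle, is the asymptotic that for any prime $p$ and any sequence $M_n$ of nonnegative integers with $M_n = O(\log n)$,
\begin{equation*}
\frac{\#\{\lambda \vdash n \,:\, v_p(f^\lambda) \geq M_n\}}{p(n)} \;\longrightarrow\; 1 \qquad \text{as } n \to \infty.
\end{equation*}
My approach is via the hook-length formula $f^\lambda = n!/\prod_{u \in \lambda} h(u)$ together with the $p$-core/$p$-quotient bijection, which expresses $v_p(f^\lambda)$ in terms of the sizes of the components of the $p$-quotient and an explicit combinatorial correction. Classical estimates force the $p$-core of a uniformly random $\lambda \vdash n$ to have size $O(\sqrt{n})$ with probability tending to $1$; iterating the $p$-quotient decomposition then forces $v_p(f^\lambda)$ to grow at least polynomially in $n$ for a density-$1$ family of partitions, which is far stronger than the $O(\log n)$ threshold required.

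Finally, for each prime $p$ dividing $d$, applying the displayed asymptotic with $M_n := v_p(d) + v_p(c(n))$ shows that $v_p(\chi_{\pi_\lambda}(\iota_n(g))) \geq v_p(d)$ holds for a density-$1$ family of $\lambda \vdash n$. Intersecting over the finitely many primes dividing $d$ yields $d \mid \chi_{\pi_\lambda}(\iota_n(g))$ for a density-$1$ family, completing the proof.
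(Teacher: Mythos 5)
Your proposal follows essentially the same route as the source \cite{gps} that the paper is reviewing here: reduce divisibility of $\chi_{\pi_\lambda}(\iota_n(g))$ to a lower bound on $v_p(f^\lambda)$ via the algebraic-integer lemma and an $O(\log n)$ bound on $v_p([S_n:Z_{S_n}(\iota_n(g))])$, then invoke the density-one statement that $v_p(f^\lambda)$ exceeds such logarithmic thresholds (the paper's Theorem \ref{asymp1}, proved via $q$-core towers), and intersect over the primes dividing $d$. The one small point to note is that you state the key asymptotic for arbitrary sequences $M_n=O(\log n)$, whereas Theorem \ref{asymp1} has the specific threshold $h+(q-1)\log n$; since $v_p(c(n))\leq \log_p n+O(1)$ by Legendre's formula, the instance you actually need is covered by that theorem, so this is a matter of phrasing rather than a gap.
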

In particular, for any integer $d$, the probability that an irreducible character of $S_n$ has degree divisible by $d$ converges to $1$ as $n$ approaches infinity.

Let $\mathfrak{f}_\lambda$ denote the degree of irreducible representation of $S_n$ corresponding to partition $\lambda$.
In order to prove the main theorem, the authors focus on the divisibility properties of $\mathfrak{f}_\lambda$.
For each prime number $q$, let $v_q(m)$ denote the $q$-adic valuation of an integer $m$, in other words, $q^{v_q(m)}$ is the largest power of $q$ that divides $m$. Another key factor in their proof is the divisibility property of the degree of the irreducible representations of $S_n$.

\begin{theorem}\cite[Theorem $A$]{gps}\label{asymp1}
	$$\lim\limits_{n \to \infty}\dfrac{\#\{\lambda \vdash n \mid \upsilon_q(\mathfrak{f}_\lambda) \leq h + (q-1)\log n \}}{p(n)} = 0.$$
\end{theorem}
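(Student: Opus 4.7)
The plan is to prove Theorem~\ref{asymp1} by combining the hook length formula with the combinatorics of the $q$-core and $q$-quotient of a partition. By the hook length formula,
\begin{equation*}
\mathfrak{f}_\lambda = \frac{n!}{\prod_{c\in\lambda} h(c)},
\end{equation*}
so $\upsilon_q(\mathfrak{f}_\lambda) = \upsilon_q(n!) - \upsilon_q\!\left(\prod_c h(c)\right)$, and by Legendre's formula $\upsilon_q(n!) = (n - s_q(n))/(q-1)$, where $s_q(n)$ is the base-$q$ digit sum. First I would pass to the $q$-core/$q$-quotient bijection: each $\lambda\vdash n$ corresponds to a pair $(\mu_q(\lambda), (\lambda^{(0)}, \ldots, \lambda^{(q-1)}))$ with $\mu_q(\lambda)\vdash r$ a $q$-core and $w:=\sum_i |\lambda^{(i)}| = (n-r)/q$. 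The crucial combinatorial input is that the multiset of hook lengths of $\lambda$ divisible by $q$ is exactly $\{qh : h \text{ is a hook length of some } \lambda^{(i)}\}$, while hooks not divisible by $q$ contribute nothing to $\upsilon_q$. This yields the recursive identity
\begin{equation*}
\upsilon_q\!\left(\prod_{c\in\lambda} h(c)\right) = w + \sum_{i=0}^{q-1} \upsilon_q\!\left(\prod_{c\in\lambda^{(i)}} h(c)\right).
\end{equation*}

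Next I would iterate this decomposition: applying the $q$-core/$q$-quotient construction recursively to each $\lambda^{(i)}$ terminates after $O(\log_q n)$ levels, leaving only $q$-cores, and produces an explicit formula for $\upsilon_q(\mathfrak{f}_\lambda)$ as a sum over the resulting rooted tree involving sizes and base-$q$ digit sums at each node. The target is to show that any partition with $\upsilon_q(\mathfrak{f}_\lambda) \le h + (q-1)\log n$ must be very rigid in this nested structure: almost all of the mass of $\lambda$ is forced into nested $q$-cores, with only a logarithmic number of ``non-core deviations'' along the branches. Since the generating function for $q$-cores (the $\eta$-product $\prod_{k\ge 1}(1-x^{qk})^q/(1-x^k)$) implies that the count of $q$-cores of size $m$ grows only polynomially in $m$, an iterated count of such rigid nested structures is bounded by a polynomial in $n$ of degree depending only on $h$ and $q$. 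Comparison with the Hardy--Ramanujan asymptotic $p(n) \sim (4n\sqrt{3})^{-1}\exp(\pi\sqrt{2n/3})$ then forces the ratio to tend to $0$.

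The hard part is carrying out the iteration tightly enough: one must track how the digit-sum contributions $s_q(|\lambda^{(i)}|)$ and the permitted ``non-core'' branches accumulate across $O(\log n)$ recursion levels so that the polynomial bound on the number of bad partitions remains valid precisely up to the threshold $h + (q-1)\log n$, rather than merely for fixed constants. The factor $(q-1)$ in the threshold should emerge naturally from the $(q-1)$ denominator in Legendre's formula, but converting this matching into an effective bookkeeping on the nested tree is the principal technical obstacle. Once the count of bad $\lambda$ is controlled by a polynomial in $n$, the super-polynomial growth of $p(n)$ closes the argument.
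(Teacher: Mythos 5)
You should first note that the paper does not actually prove this statement: Theorem~\ref{asymp1} is imported verbatim from \cite{gps} (their Theorem~A), and the present paper only remarks that the proof rests on the theory of $q$-core towers of Macdonald and Olsson. Your route --- hook length formula, Legendre's formula, and the iterated $q$-core/$q$-quotient decomposition with the identity $\upsilon_q\bigl(\prod_{c\in\lambda}h(c)\bigr)=w+\sum_i\upsilon_q\bigl(\prod_{c\in\lambda^{(i)}}h(c)\bigr)$ --- is exactly that machinery: iterating the quotient construction $O(\log_q n)$ times produces the $q$-core tower, and your resulting formula for $\upsilon_q(\mathfrak f_\lambda)$ is Macdonald's $\upsilon_q(\mathfrak f_\lambda)=\frac{1}{q-1}\bigl(\sum_{i\ge 0}\alpha_i(\lambda)-\alpha_q(n)\bigr)$, where $\alpha_i(\lambda)$ is the total size of the partitions in the $i$-th row of the tower and $\alpha_q(n)$ is the base-$q$ digit sum. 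So the approach is the intended one, not a new one.

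There is, however, a genuine gap at the counting step, which you yourself flag as the ``principal technical obstacle'' but do not resolve; as stated your plan is a programme rather than a proof. Moreover the specific quantitative claim you lean on --- that the number of bad $\lambda$ is ``bounded by a polynomial in $n$ of degree depending only on $h$ and $q$'' --- is false. The condition $\upsilon_q(\mathfrak f_\lambda)\le h+(q-1)\log n$ translates into $\sum_i\alpha_i(\lambda)\le \alpha_q(n)+(q-1)h+(q-1)^2\log n=O(\log n)$, so the total weight $W$ of the core tower of a bad partition is of order $\log n$, not $O(1)$. A tower of weight $W$ is specified by choosing at most $W$ occupied positions among the roughly $\sum_{i\le\log_q n}q^i<2n$ slots and a $q$-core at each, giving a bound of shape $n^{O(W)}=n^{O(\log n)}=e^{O((\log n)^2)}$: quasi-polynomial, not polynomial. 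The argument still closes, because $e^{O((\log n)^2)}=o\bigl(e^{\pi\sqrt{2n/3}}\bigr)=o(p(n))$, but you must replace ``polynomial'' by this quasi-polynomial estimate and actually carry out the count of weight-$W$ towers (including the polynomial bound on the number of $q$-cores of each size, which you correctly identify via the eta-product). Until that enumeration is written down, the decisive step of the theorem is asserted rather than proved.
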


Here $h$ is a positive integer.
The proof of the above mentioned result is based on the theory of $q$-core towers.  This construction originated in the seminal paper \cite{macd} of Macdonald, and was developed further by Olsson in \cite{Olb}. The proof of the main theorem can be found in \cite[Section $3$]{gps}.

\section{Hyperoctahedral Groups}\label{hyper}

For this and the next section we follow the definitions and results from \cite{musili}. Consider the set $\mathfrak{X}_n=\{\pm1,\pm2,\ldots,\pm n\}$. Here we write $S_{2n}$ for the group of bijections from  $\mathfrak{X}_n$ to itself. For $n\geq 2$, we define the  $n$-th hyperoctahedral group $\mathbb{B}_n$ to be the following subgroup of $S_{2n}$:
$$\mathbb{B}_n=\{\sigma\in S_{2n}\mid \sigma(i)+\sigma(-i)=0, 1\leq i\leq n\}.$$
It is the Weyl group of types $B_n$ and $C_n$. An element in $\mathbb{B}_n$ which is 
\begin{enumerate}
	\item
	a product of two $l$-cycles of the form $(a_1, a_2, \ldots, a_l)(-a_1, -a_2,\ldots, -a_l)$ is called a positive $l$-cycle.
	\item 
	a $2l$-cycle of the form $(a_1, a_2,\ldots, a_l,-a_1,\ldots, -a_l)$ is called a negative $l$-cycle.
\end{enumerate}

For an  $l$-cycle $\sigma_j=(a_1,a_2,\ldots, a_l)$ we write $\bar{\sigma}_j=(-a_1, -a_2,\ldots, -a_l)$.

\subsection{Conjugacy Classes in $\mathbb{B}_n$} \label{conjB}

Suppose $n=a+b$, where $a$ and $b$ are two non-negative integers. Let $\alpha$ be a partition of $a$ and $\beta$ be a partition of $b$. Then the pair $(\alpha,\beta)$ is called a bipartition of $n$ and we write $(\alpha,\beta)\vDash n$ in this case.
The number of bipartitions  of $n$ is denoted by $p_2(n)$.

Any element $\sigma\in \mathbb{B}_n$ has a cycle decomposition $$\sigma=\sigma_1\bar{\sigma}_1\sigma_2\bar{\sigma_2}\cdots \sigma_r\bar{\sigma}_r\nu_1\nu_2\cdots\nu_s,$$ 
where $\sigma_i  \bar{\sigma}_i$ is a positive cycle and $\nu_k$ is a negative cycle. We take $|\sigma_i|=\lambda_i$ and $|\nu_j|=2\mu_j$. Then the pair of partitions $(\lambda,\mu)$ is called the cycle type of $\sigma$, where $\lambda=(\lambda_1,\lambda_2,\ldots,\lambda_r)$ and 
$\mu=(\mu_1,\ldots,\mu_s)$. In fact this gives a bijection between the set of conjugacy classes of $\mathbb{B}_n$ and the set of bipartitions of $n$. 
We denote the conjugacy class corresponding to partition $(\lambda, \mu)$ by $\mathfrak{C}_{\mb {B}_n}(\lambda, \mu)$. An element in $\mathfrak{C}_{\mb {B}_n}(\lambda, \mu)$ is denoted by $g(\lambda,\mu)$. For a fixed positive integer $k$, consider the chain of Hyperoctahedral groups 
\begin{center}
	\begin{tikzpicture}
		\node (A1) at (-.5,0) {$\mathscr{C}(\mb {B}_{n}, \mb{B}_k)$};
		\node (A2) at (.4,0) {$:$};
		\node (A3) at (1,0) {$\mb B_k$};
		\node (A4) at  (2.5,0) {$\mb B_{k+1}$};
		\node (A5) at  (4.2,0) {$\mb B_{k+2}$};
		\node (A6) at  (6,0) {$\cdots$};		
		\node (A7) at  (7.2,0) {$\mb B_{n}$};		
		\node (A8) at  (8.8,0) {$\cdots$};
		\draw[->](A3) to node [above]{$j_{k+1}$} (A4);
		\draw[->](A4) to node [above]{$j_{k+2}$} (A5);
		\draw[->](A5) to node [above]{$j_{k+3}$} (A6);
		\draw[->](A6) to node [above]{$j_n$} (A7);
		\draw[->](A7) to node [above]{$j_{n+1}$} (A8);
	\end{tikzpicture}
\end{center}

Consider an element $\sigma\in S_{2(n-1)}$. Take the map $j_n^*:S_{2(n-1)}\to S_{2n}$, such that $j_n^*(\sigma)\mid_{\mathfrak{X}_{n-1}}=\sigma$ and $j_n^*(\sigma)(\pm n)=\pm n$. 
Since $\mb{B}_k$ is a subgroup of $S_{2k}$, we define  $j_n :\mb B_{n-1} \to \mb B_n $ to be the restriction of $j_n^*$ to $\mb{B}_{n-1}$. It is easy to see that $$\iota_n(g(\lambda,\mu)) = g((\lambda, 1^{n-k}), \mu).$$



\subsection{Irreducible Representations of $\mathbb{B}_n$}

We write $\epsilon$ to denote the nontrivial character of $C_2$, the cyclic group of order $2$.
The $n$-th hyperoctahedral group can also be described as the wreath product $\mathbb{B}_n=C_2\wr S_n=C_2^n\rtimes S_n$. The normal subgroup $C_2^n\lhd \mathbb{B}_n$ has two $S_n$-invariant characters, namely the trivial one and $\eta=\epsilon\otimes\cdots\otimes \epsilon$.

   Let $\pi_{\lambda}$ denote the irreducible representation of $S_n$ corresponding to the partition $\lambda$. Consider two irreducible representations of $\mathbb{B}_n$ namely,
$$\pi_{\lambda}^0(x,\sigma)=\pi_{\lambda}(\sigma),\quad \pi_{\lambda}^1(x,\sigma)=\eta(x)\pi_{\lambda}(\sigma),$$
for $x\in C_2^n$ and $\sigma\in S_n$. Let $\pi_{\alpha,\beta}$ be defined as

\begin{equation}\label{repbn}
\pi_{\alpha,\beta}=\mathrm{Ind}_{\mathbb{B}_a\times \mathbb{B}_b}^{\mathbb{B}_n}\pi_{\alpha}^0\boxtimes\pi_{\beta}^1.
\end{equation}

In fact, the collection 
$$\{\pi_{\alpha,\beta}\mid \alpha\vdash a, \beta\vdash b, a+b=n\},$$
gives a complete set of representatives for the set of isomorphism classes of irreducible representations of $\mathbb{B}_n$. More details can be found about the irreducible representations of $\mathbb{B}_n$ in \cite{macdonald} and \cite{kinch}. 
Equation \eqref{repbn} suggests that 
\begin{equation}\label{dim}
\dim \pi_{\alpha,\beta}=\binom{n}{a}\mathfrak{f}_{\alpha}\mathfrak{f}_{\beta}.
\end{equation}

%
%
%
%
%
%

\subsection{Proof of Theorem \ref{mt}}

Let $Z_G(g)$ denote the centralizer of an element $g$ in $G$. To establish the divisibility of an irreducible character we use the following result in 
\cite[Exercise 6.9]{ser}. 

\begin{lemma}\label{sr}
Let $G$ be a finite group, $g\in G$ and $\pi\in \irr(G)$. Then	
$$\dfrac{\chi_{\pi}(g)}{\dim \pi}[G: Z_G(g)]$$
is an (algebraic) integer.
\end{lemma}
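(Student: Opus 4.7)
The plan is to invoke the classical class-sum argument inside the group algebra. Let $C$ denote the conjugacy class of $g$ in $G$, so that $|C| = [G : Z_G(g)]$, and form the class sum
$$z_C = \sum_{h \in C} h \in \Z[G].$$
Because $C$ is stable under conjugation, $z_C$ lies in the center $Z(\Z[G])$ of the integral group ring.

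First, I would pin down the scalar by which $\pi$ sends $z_C$. Since $\pi$ is irreducible, Schur's lemma gives $\pi(z_C) = \omega \cdot I$ for some $\omega \in \C$. Taking traces yields $\dim(\pi)\cdot \omega = |C|\chi_\pi(g)$, which rearranges to
$$\omega = \frac{\chi_\pi(g)}{\dim \pi}\,[G:Z_G(g)].$$
The lemma thus reduces to showing that $\omega$ is an algebraic integer.

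Second, I would exhibit $z_C$ itself as integral over $\Z$. Letting $C_1,\dots,C_k$ be the conjugacy classes of $G$, the class sums $z_{C_1},\dots,z_{C_k}$ form a $\Z$-basis of $Z(\Z[G])$. Their pairwise products expand in this basis with non-negative integer structure constants, since the coefficient of $z_{C_\ell}$ in $z_{C_i}z_{C_j}$ counts, for any fixed $w \in C_\ell$, the pairs $(u,v) \in C_i \times C_j$ with $uv = w$, a count that is conjugation-invariant and hence well-defined. Therefore $Z(\Z[G])$ is a finitely generated $\Z$-subalgebra of $\C[G]$, so each $z_{C_i}$ satisfies a monic polynomial $p(x) \in \Z[x]$. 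Applying $\pi$ to $p(z_C) = 0$ gives $p(\omega) = 0$, and $\omega$ is an algebraic integer.

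The main obstacle, such as it is, is conceptual rather than technical: one must translate the divisibility claim into an integrality statement about $Z(\Z[G])$ as a ring. After that insight, the proof is a short chain of Schur's lemma, a trace identity, and the standard fact that elements of a finitely generated integral $\Z$-subalgebra of $\C[G]$ satisfy monic integer polynomials.
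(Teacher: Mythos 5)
Your proof is correct, and it is the standard argument underlying the result: the paper does not prove this lemma at all, but simply cites \cite[Exercise 6.9]{ser}, so your class-sum argument supplies exactly the proof that the citation points to. The chain Schur's lemma $\Rightarrow$ trace identity $\Rightarrow$ integrality of central elements of $\Z[G]$ is complete and each step is justified. One small point of wording deserves care: what you actually need (and what you actually establish) is that $Z(\Z[G])$ is finitely generated \emph{as a $\Z$-module}, with the class sums as a $\Z$-basis and integer structure constants; this is what forces every element to be integral over $\Z$, by the usual determinant trick. Saying it is a ``finitely generated $\Z$-subalgebra'' is not sufficient on its own --- $\Z[1/2]$ is a finitely generated $\Z$-algebra whose elements are not all algebraic integers --- but since your basis-and-structure-constants computation does prove module-finiteness, the argument goes through as written once that phrase is read in the intended sense.
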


Since characters of $\mathbb{B}_n$ take integer values, Lemma \ref{sr} asserts that 
\begin{equation}\label{key}
	\dfrac{\chi_{\pi_{\alpha,\beta}}(w)}{\dim\pi_{\alpha,\beta}}[\mathbb{B}_n:Z_{\mathbb{B}_n}(w)]\in \mathbb{Z},
\end{equation}
for an element $w\in \mathbb{B}_n$.
Therefore 
\begin{equation}\label{hchar}
	\chi_{\pi_{\alpha,\beta}}(w)=m'\dim\pi_{\alpha, \beta}\dfrac{1}{[\mathbb{B}_n:Z_{\mathbb{B}_n}(w)]},
\end{equation}
where $m'\in \mathbb{Z}$. The next result gives an expression for the character values of representations of $\mb{B}_n$. We introduce one more notation here. The length of a partition $\nu$, denoted by $l(\nu)$, is equal to the number of parts of $\nu$.

\begin{lemma}\label{lemma1}
Let $\pi_{\alpha, \beta}\in \irr(\mb{B}_n)$ and $w\in \mathfrak{C}_{\mb {B}_n}((\lambda,1^{n-k}), \mu)$. Then 
$$	\chi_{\pi_{\alpha,\beta}}(w)=\frac{m}{2^k(n)_k}\binom{n}{|\alpha|}\mathfrak{f}_{\alpha}\mathfrak{f}_{\beta}2^{l(\lambda)+l(\mu)},$$
where $(n)_k=n(n-1)\cdots (n-k+1)$ and $m\in \mathbb{Z}$.
\end{lemma}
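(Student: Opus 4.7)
The plan is to combine the centralizer identity (3.3) with the dimension formula (3.2), substitute an explicit expression for $|Z_{\mathbb{B}_n}(w)|$, and finally rewrite the result in a form that isolates the factors $2^k$ and $(n)_k$ in the denominator, checking by inspection that the remaining coefficient is an integer.

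First I would recall the centralizer order in $\mathbb{B}_n$. Viewing $\mathbb{B}_n = C_2 \wr S_n$, a standard wreath-product computation (or a direct analysis of the positive/negative cycle structure) gives, for an element of cycle type $(\nu,\mu)$,
\[
|Z_{\mathbb{B}_n}(g(\nu,\mu))| \;=\; 2^{l(\nu)+l(\mu)}\, z_\nu \, z_\mu,
\]
where $z_\pi = \prod_{i} i^{m_i(\pi)} m_i(\pi)!$ is the usual symmetric-group centralizer factor. Applying this with $\nu = (\lambda, 1^{n-k})$ and using $|\mathbb{B}_n| = 2^n n!$, equations (3.2) and (3.3) together yield
\[
\chi_{\pi_{\alpha,\beta}}(w) \;=\; \frac{m'\, \binom{n}{|\alpha|}\,\mathfrak{f}_\alpha\,\mathfrak{f}_\beta\, 2^{l(\lambda)+l(\mu)}\, z_{(\lambda,1^{n-k})}\, z_\mu}{2^{k}\, n!},
\]
for some $m' \in \mathbb{Z}$, the $2^{n-k}$ contributed by the centralizer cancelling against $2^n$ in $|\mathbb{B}_n|$ to leave $2^k$ in the denominator.

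The last step is to pull a factor of $(n)_k$ out of $n!$ and verify integrality of what remains. Since $(\lambda,1^{n-k})$ differs from $\lambda$ only in the multiplicity of $1$, one has
\[
z_{(\lambda,1^{n-k})} \;=\; z_\lambda \cdot \frac{(m_1(\lambda)+n-k)!}{m_1(\lambda)!},
\]
and writing $n! = (n)_k\,(n-k)!$ rewrites the previous display as
\[
\chi_{\pi_{\alpha,\beta}}(w) \;=\; \frac{m}{2^{k}(n)_k}\,\binom{n}{|\alpha|}\,\mathfrak{f}_\alpha\,\mathfrak{f}_\beta\, 2^{l(\lambda)+l(\mu)},
\]
with $m = m'\, z_\lambda\, z_\mu\, \binom{m_1(\lambda)+n-k}{n-k}$. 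Every factor in $m$ is a rational integer, so $m \in \mathbb{Z}$, as required. The only real obstacle is careful bookkeeping of the powers of $2$ and the factorials so that the binomial $\binom{m_1(\lambda)+n-k}{n-k}$ emerges cleanly; beyond this, no new ingredients past Lemma \ref{sr}, equation (3.2), and the wreath-product centralizer formula are needed.
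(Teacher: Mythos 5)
Your proposal is correct and follows essentially the same route as the paper: apply Lemma \ref{sr} together with the dimension formula \eqref{dim} and the centralizer order $2^{l(\nu)+l(\mu)}z_\nu z_\mu$ (which the paper cites from Tout rather than deriving from the wreath-product structure), then absorb $(n-k)!$ from $z_{(\lambda,1^{n-k})}$ into $n!=(n)_k(n-k)!$. Your only cosmetic difference is packaging the leftover integer as $z_\lambda z_\mu\binom{m_1(\lambda)+n-k}{n-k}$ where the paper simply calls it $A$.
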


\begin{proof}
Consider an element $\sigma$ in $S_r$ with cycle type $\delta$. Let $m_i$ denote the number of $i$ cycles in $\sigma$. We write $z_{\delta}$ for the size of the centralizer of $\sigma$. Then
$$z_{\delta}=1^{m_1}m_1!\,2^{m_2}m_2!\ldots t^{m_t}m_t!.$$
From \cite[Page 313, Corollary 1]{tout} we have 
$$|\mathfrak{C}_{\mb {B}_n}((\lambda,1^{n-k}), \mu)|=[\mathbb{B}_n:Z_{\mathbb{B}_n}(w)]=\dfrac{2^nn!}{2^{l(\lambda)+l(\mu)}z_{\lambda}z_{\mu}}.$$

Following Equations \eqref{hchar} and \eqref{dim} one computes

\begin{align*}
	\chi_{\pi_{\alpha,\beta}}(w)&=\frac{m'}{2^nn!}2^{l(\lambda,1^{n-k})+l(\mu)}z_{(\lambda,1^{n-k})}z_{\mu}\dim\pi_{\alpha,\beta}\\
	&=\frac{m'}{2^nn!}\binom{n}{|\alpha|}\mathfrak{f}_{\alpha}\mathfrak{f}_{\beta}2^{l(\lambda)+l(\mu)+n-k}z_{(\lambda,1^{n-k})}z_{\mu}.\\
	&=\frac{m'}{2^kn!}\binom{n}{|\alpha|}\mathfrak{f}_{\alpha}\mathfrak{f}_{\beta}2^{l(\lambda)+l(\mu)}z_{(\lambda,1^{n-k})}z_{\mu}.
\end{align*}

Expanding the expression $z_{(\lambda,1^{n-k})}$ we obtain
\begin{align*}
	z_{(\lambda,1^{n-k})}&=(m_1+n-k)!2^{m_2}m_2!\ldots t^{m_t}m_t!	\\
	&=(n-k)!(n-k+1)(n-k+2)\cdots (m_1+n-k)2^{m_2}m_2!\ldots t^{m_t}m_t!.
\end{align*}
In short, we  write $z_{(\lambda,1^{n-k})}z_{\mu}=(n-k)!A$ where $A\in\mathbb{Z}$.

Therefore,
\begin{align*}
	\chi_{\pi_{\alpha,\beta}}(w)&=\frac{m'}{2^kn!}\binom{n}{|\alpha|}\mathfrak{f}_{\alpha}\mathfrak{f}_{\beta}2^{l(\lambda)+l(\mu)}(n-k)!A\\
	&=\frac{m}{2^k(n)_k}\binom{n}{|\alpha|}\mathfrak{f}_{\alpha}\mathfrak{f}_{\beta}2^{l(\lambda)+l(\mu)},
\end{align*}
where $m=m'A$.

\end{proof}


For each prime number $q$, let $v_q(m)$ denote the $q$-adic valuation of an integer $m$. We know that
 $\upsilon_q(n!)=\frac{n-\nu(n)}{q-1}$, where $\nu(n)$ is the sum of the coefficients in the $q$-nary expansion of $n$. Elementary calculation shows
\begin{equation}\label{vqnk}
\upsilon_q(2^k(n)_k)=\upsilon_q\left(\frac{2^k\cdot n!}{(n-k)!}\right)=\upsilon_q(2^k)+\frac{k+\nu(n-k)-\nu(n)}{q-1}\leq 2k+(q-1)\log_q(n).
\end{equation}

%


We now provide a proof of the main theorem. Let $\mathscr{P}(n)$ denote the set of partitions of $n$. We write $\lambda\vdash n$ to say that $\lambda$ is a partition of $n$.

\begin{proof}[Proof of \ref{mt}]
	
For this proof we essentially use the divisibility property of the degree of the irreducible representations of $S_n$.
Consider the set
$$\mathcal{T}(n)=\{\lambda\vdash n\mid \upsilon_q(\mathfrak{f}_{\lambda})\geq r+2k+(q-1)\log_qn\}.$$

Taking $h=r+2k$ in Theorem \ref{asymp1} gives 
$$\lim\limits_{n \to \infty}\dfrac{|\mathcal{T}(n)|}{p(n)}=1.$$
In other words, for a fixed $\delta>0$, there exists $N\in\mathbb{N}$ such that for $n>N$ we have
\begin{equation} \label{limit}
	\dfrac{|\mathcal{T}(n)|}{p(n)}>1-\delta.
\end{equation}

We construct one more subset of $\mathscr{P}(n)$ as follows

$$\mathcal{A}(n)=\{\lambda\vdash n\mid \upsilon_q(\mathfrak{f}_{\lambda})-\upsilon_q(2^k(n)_k)\geq r \}.$$
Equation \eqref{vqnk} clearly shows that $\mathcal{A}(n)\supseteq \mathcal{T}(n)$.	

Now we turn our attention to irreducible representations of $\mb{B}_n$. Using Lemma \ref{lemma1}, we obtain
\begin{equation}\label{prop}
\upsilon_q(\chi_{\pi_{\alpha,\beta}}(w))\geq \upsilon_q(\mathfrak{f}_{\alpha})+\upsilon_q(\mathfrak {f}_{\beta})-\upsilon_q(2^k(n)_k),
\end{equation}
where $w\in \mathfrak{C}_{\mb {B}_n}((\lambda,1^{n-k}), \mu)$. Fix a non-negative integer $r$.
We want to count the proportion of irreducible characters $\chi_{\pi_{\alpha,\beta}}$ for which $\upsilon_q(\chi_{\pi_{\alpha,\beta}}(w))\geq r$.
Towards that we define 
	\begin{equation}
		S=\{(\alpha,\beta)\vDash n\mid \upsilon_q(\mathfrak{f}_{\alpha})+\upsilon_q(\mathfrak{f}_{\beta})-\upsilon_q(2^k(n)_k)\geq r\}.
	\end{equation}
We aim to prove 
$$\lim\limits_{n \to \infty}\dfrac{|S|}{p_2(n)}=1.$$
To achieve this we define a subset of $S$ as follows:
	
	\begin{equation}\label{s'}
		S'=\bigsqcup_{a=\lfloor\frac{n}{2} \rfloor+1 }^{n}\left(\mathcal{A}(a)\times \{\beta\vdash n-a\}\right)\bigsqcup_{a=0 }^{\lfloor\frac{n}{2} \rfloor}\left(\{\alpha\vdash a\}\times \mathcal{A}(n-a)\right).
	\end{equation}

Using \eqref{limit} and \eqref{s'} one can provide a lower bound for $|S'|$.
Fix $\delta>0$. Then there exists $N\in \mb{Z}$ such that for $n>N$ we have

	\begin{align*}
		|S'|&= \sum_{a=\lfloor\frac{n}{2} \rfloor+1}^{n}|\mathcal{A}(a)|p(n-a)+\sum_{a=0}^{\lfloor\frac{n}{2} \rfloor}p(a)|\mathcal{A}(n-a)|\\
		&\geq \sum_{a=\lfloor\frac{n}{2} \rfloor+1}^{n}|\mathcal{T}(a)|p(n-a)+\sum_{a=0}^{\lfloor\frac{n}{2} \rfloor}p(a)|\mathcal{T}(n-a)|\\
		&>\sum_{a=\lfloor\frac{n}{2} \rfloor+1}^{n}(1-\delta)p(a)p(n-a)+\sum_{a=0}^{\lfloor\frac{n}{2} \rfloor}(1-\delta)p(a)p(n-a)\\
		&=(1-\delta)\sum_{a=0}^{n}p(a)p(n-a)\\
		&=(1-\delta)p_2(n).
	\end{align*}
	
Since $S'\subseteq S$, for $n$ large enough one obtains
	\begin{align}
		\dfrac{|S|}{p_2(n)}\geq \dfrac{|S'|}{p_2(n)}>1-\delta.
	\end{align}	
	
\end{proof}

\section{Demi-Hyperoctahedral Group}\label{demihyper}


The Demi-Hyperoctahedral group $\mathbb{D}_n$ is a subgroup of $\mathbb{B}_n$ defined as follows:
$$\mathbb{D}_n=\{\theta\in \mathbb{B}_n\mid \#\{i\mid \theta(i)<0, 1\leq i\leq n\}\,\,\text{is even}\}.$$
It is the Weyl group of type $D_n$. To study the asymptotic nature of the divisibility of the character values of $\mathbb{D}_n$, we follow a similar method. 
Fix a positive integer $k$. For the chain of groups 
\begin{center}
	\begin{tikzpicture}
		\node (A1) at (-.5,0) {$\mathscr{C}(\mb {D}_{n}, \mb{D}_k)$};
		\node (A2) at (.4,0) {$:$};
		\node (A3) at (1,0) {$\mb D_k$};
		\node (A4) at  (2.5,0) {$\mb D_{k+1}$};
		\node (A5) at  (4.2,0) {$\mb D_{k+2}$};
		\node (A6) at  (6,0) {$\cdots$};		
		\node (A7) at  (7.2,0) {$\mb D_{n}$};		
		\node (A8) at  (8.8,0) {$\cdots$};
		\draw[->](A3) to node [above]{$j_{k+1}$} (A4);
		\draw[->](A4) to node [above]{$j_{k+2}$} (A5);
		\draw[->](A5) to node [above]{$j_{k+3}$} (A6);
		\draw[->](A6) to node [above]{$j_n$} (A7);
		\draw[->](A7) to node [above]{$j_{n+1}$} (A8);
	\end{tikzpicture}
\end{center}

The map $j_{k}:\mb{D}_{k-1}\to \mb D_{k}$ is defined as the restriction of the map $j_k: \mb{B}_{k-1}\to \mb B_{k}$. Let $\theta$ be an element of $\mb{D}_k$ with cycle type $(\lambda,\mu)$.
Then $\iota_n(\theta)=g((\lambda,1^{n-k}),\mu)$. In fact, the conjugacy class of $g((\lambda,1^{n-k}),\mu)$ in $\mathbb{B}_n$ remains a single conjugacy class when restricted to $\mathbb{D}_n$. This is evident from the following result.

\begin{theorem}\label{conjDn}
	The conjugacy class $\mathfrak{C}_{\mathbb{B}_n}(\lambda,\mu)$ in $\mathbb{B}_n$ splits into a union of two conjugacy classes $\mathfrak{C}_{\mathbb{D}_n}^{+}(\lambda,\mu)\cup \mathfrak{C}_{\mathbb{D}_n}^{-}(\lambda,\mu)$ of $\mathbb{D}_n$ if and only if $\mu=0$ and all parts of $\lambda$ are even.
\end{theorem}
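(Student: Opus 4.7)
The plan is to invoke the standard index-two principle: if $H\leq G$ has index $2$ and $C\subseteq H$ is a $G$-conjugacy class, then $C$ is either a single $H$-conjugacy class or it breaks into two $H$-classes of equal size, with the latter happening precisely when $Z_G(g)\subseteq H$ for some (equivalently, every) $g\in C$. Applied with $H=\mathbb{D}_n$ and $G=\mathbb{B}_n$, the theorem reduces to identifying those cycle types $(\lambda,\mu)$ for which the $\mathbb{B}_n$-centralizer of a representative is contained in $\mathbb{D}_n$. Using that $\mathbb{D}_n$ is the kernel of the sign homomorphism $\varepsilon\colon\mathbb{B}_n\to\{\pm 1\}$ defined by $\varepsilon(\theta)=(-1)^{\#\{i\,:\,\theta(i)<0,\,1\le i\le n\}}$ (which is a homomorphism thanks to the semidirect-product structure $\mathbb{B}_n\cong C_2^n\rtimes S_n$), the question becomes whether $\varepsilon$ is trivial on the centralizer.

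Next I would spell out the centralizer structure explicitly. For a standard representative $g$ of type $(\lambda,\mu)$, the centralizer decomposes, in the familiar wreath-product manner, as a direct product of factors indexed by distinct cycle length and type: each positive $l$-cycle contributes a factor isomorphic to $\mathbb{Z}/l\times\mathbb{Z}/2$, generated by the cycle itself together with the simultaneous sign flip on its support; each negative $l$-cycle contributes a cyclic factor of order $2l$, generated by the cycle itself; and each multiplicity $m$ of like cycles introduces an outer $S_m$ realized by signed permutations that merely transport the supports of like cycles to each other.

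Finally I would evaluate $\varepsilon$ on these generators, a short direct calculation: (i) the cycle generating a positive $l$-cycle factor has $\varepsilon=+1$; (ii) the full sign flip on its support has $\varepsilon=(-1)^l$; (iii) the generator of a negative $l$-cycle factor has $\varepsilon=-1$ (a single sign change appears among its positive indices); and (iv) the outer $S_m$-factors can be realized by sign-free bijections between supports, so $\varepsilon=+1$. Consequently $Z_{\mathbb{B}_n}(g)\subseteq\ker\varepsilon$ if and only if no factor of type (iii) occurs, i.e.\ $\mu=\emptyset$, and every instance of (ii) contributes $+1$, i.e.\ every part of $\lambda$ is even; this is exactly the criterion in the theorem. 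The only mild obstacle I foresee is justifying the centralizer decomposition carefully enough that no element with $\varepsilon=-1$ is overlooked, but this is a standard consequence of the wreath-product description of centralizers in $C_2\wr S_n$ and poses no real difficulty.
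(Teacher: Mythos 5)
Your argument is correct, and in fact the paper offers no proof of this statement at all: it is quoted as a known fact (following the reference given at the start of Section 3), so your write-up supplies an argument the authors omit. The index-two splitting criterion, the identification of $\mathbb{D}_n$ as the kernel of $\varepsilon(\theta)=(-1)^{\#\{i\,:\,\theta(i)<0\}}$, the description of the centralizer factors, and the evaluation of $\varepsilon$ on each generator are all accurate, and they yield exactly the stated criterion. Two small points would make it airtight. First, the splitting criterion applies only to $\mathbb{B}_n$-classes that actually lie in $\mathbb{D}_n$; a class of type $(\lambda,\mu)$ lies in $\mathbb{D}_n$ precisely when $l(\mu)$ is even, and you should note that when $\mu\neq\emptyset$ with $l(\mu)$ even your item (iii) still obstructs splitting, while for $l(\mu)$ odd the question is vacuous --- so the stated equivalence survives in all cases. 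Second, your ``mild obstacle'' (that no centralizing element with $\varepsilon=-1$ is overlooked) is settled cleanly by an order count: the subgroup generated by your elements (i)--(iv) has order $\prod_l (2l)^{m_l(\lambda)}m_l(\lambda)!\cdot\prod_l(2l)^{m_l(\mu)}m_l(\mu)! = 2^{l(\lambda)+l(\mu)}z_\lambda z_\mu$, which matches the centralizer order already quoted in Lemma 3.2 of the paper, so the generators you list do exhaust $Z_{\mathbb{B}_n}(g)$.
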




The irreducible representations of $\mathbb{D}_n$ are of two kinds:
\begin{itemize}
\item 
Let $(\alpha,\beta)\vDash n$ with $\alpha\neq \beta$. Then the irreducible representation $\pi_{\alpha,\beta}$ remains irreducible when restricted to $\mb D_n$. In this case, we write $\pi_{\alpha,\beta}^0$ to denote the restricted representation. Moreover, $\pi_{\alpha,\beta}^0$is isomorphic to $\pi_{\beta,\alpha}^0$. We denote the corresponding character value as $\varrho_{\alpha,\beta}=\chi^0_{\alpha,\beta}=\chi^0_{\beta,\alpha}$. 
\item 
Let $n>0$ be even and $\alpha$ be any partition of $n/2$. Then the irreducible representation $\pi_{\alpha,\alpha}$ of $\mathbb{B}_n$ when restricted to $\mathbb{D}_n$, is a sum of two non-isomorphic representations of $\mathbb{D}_n$, which are denoted by $\pi_{\alpha,\alpha}^{+}$ and $\pi_{\alpha,\alpha}^{-}$.

\end{itemize}

Therefore for $n>0$, we have
$$\irr(\mathbb{D}_n)=\{\pi^0_{\alpha,\beta}\mid \alpha\neq \beta\}\coprod \{\pi^{\pm}_{\alpha,\alpha}\mid \alpha\vdash n/2\}.$$
Therefore for $n$ odd, $|\irr(\mathbb{D}_n)|=\frac{1}{2}p_2(n)$. For $n>0$ and $n$ even, 
$$|\irr(\mathbb{D}_n)|=\frac{1}{2}(p_2(n)-p(n/2))+2p(n/2)=\frac{1}{2}(p_2(n)+3p(n/2)).$$

%

\begin{lemma}\label{repdn}
For $n$ even, we have 
$$\underset{n\rightarrow\infty}{\lim}\dfrac{\#\{\pi^{\pm}_{\alpha,\alpha}\mid \alpha\vdash n/2\}}{\irr(\mb{D}_n)}=0.$$
\end{lemma}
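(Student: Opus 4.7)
The plan is to reduce the claim to an elementary comparison between $p(n/2)$ and $p_2(n)$. First, I would count the numerator: for each $\alpha\vdash n/2$ the restriction of $\pi_{\alpha,\alpha}$ splits into precisely two non-isomorphic constituents $\pi^{+}_{\alpha,\alpha}$ and $\pi^{-}_{\alpha,\alpha}$, so the numerator equals $2p(n/2)$. Combined with the formula $|\irr(\mathbb{D}_n)|=\tfrac12(p_2(n)+3p(n/2))$ recorded just before the lemma, the ratio to be analyzed becomes
\[
\frac{2p(n/2)}{\tfrac12(p_2(n)+3p(n/2))} \;=\; \frac{4p(n/2)}{p_2(n)+3p(n/2)},
\]
so it suffices to show $p(n/2)/p_2(n)\to 0$ as $n\to\infty$ through even integers.

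Next, I would give a cheap lower bound on $p_2(n)$. Writing
\[
p_2(n)=\sum_{a=0}^{n}p(a)\,p(n-a)
\]
and retaining only the boundary terms $a=0$ and $a=n$ yields $p_2(n)\ge 2p(n)$, reducing the problem to proving $p(n/2)/p(n)\to 0$.

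Finally, I would invoke the classical Hardy--Ramanujan asymptotic
\[
p(n)\;\sim\;\frac{1}{4n\sqrt{3}}\,\exp\!\Bigl(\pi\sqrt{\tfrac{2n}{3}}\Bigr),
\]
which gives
\[
\frac{p(n/2)}{p(n)}\;\sim\;\tfrac12\,\exp\!\Bigl(-\pi(\sqrt{2}-1)\sqrt{\tfrac{n}{3}}\Bigr)\;\longrightarrow\;0,
\]
completing the proof. There is no serious obstacle; the only points worth flagging are that the limit is taken along even $n$ so that $n/2$ is an integer, and that the crude bound $p_2(n)\ge 2p(n)$ is already enough, since the exponent $\pi\sqrt{2n/3}$ in $p(n)$ exceeds its value at $n/2$ by the multiplicative factor $\sqrt{2}$, producing an exponential decay that easily swamps any polynomial prefactors.
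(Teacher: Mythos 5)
Your argument is correct, and it follows the paper's reduction up to the point where one must bound $p_2(n)$ from below: both you and the paper arrive at the ratio $4p(n/2)/\bigl(p_2(n)+3p(n/2)\bigr)$ and then discard all but a few terms of the convolution $p_2(n)=\sum_{a=0}^{n}p(a)p(n-a)$. The difference is in which terms you keep. You retain the boundary terms $a=0,n$ to get $p_2(n)\ge 2p(n)$, which forces you to invoke the Hardy--Ramanujan asymptotic to see that $p(n/2)/p(n)\to 0$. The paper instead retains the single middle term $a=n/2$, giving $p_2(n)>p(n/2)^2$, whence the ratio is at most $4/p(n/2)$ and the conclusion follows from the elementary fact that $p(n/2)\to\infty$ --- no asymptotic formula for $p(n)$ is needed. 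So your route is valid but strictly heavier; the paper's choice of term makes the cancellation happen against $p(n/2)$ itself rather than against $p(n)$. One small slip worth noting: in your final asymptotic the polynomial prefactor of $p(n/2)/p(n)$ is $2$, not $\tfrac12$ (the $1/(4n\sqrt3)$ factors give $\tfrac{4n\sqrt3}{2n\sqrt3}=2$); this does not affect the limit.
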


\begin{proof}
For $n$ even, we obtain an expression for $p_2(n)$ as follows:
\begin{align*}
	p_2(n)&=\sum_{r=0}^{n} p(r)p(n-r)\\
	&=\underset{0\leq r\leq n, r\neq n/2}{\sum}  p(r)p(n-r)+(p(n/2))^2.
\end{align*}
This gives 
\begin{equation}\label{eq1}
	p_2(n)>(p(n/2))^2.
\end{equation}
Using the above inequality we compute
$$\dfrac{\#\{\pi_{\alpha,\alpha}^{\pm}(\theta)\mid \alpha\vdash n/2\}}{|\irr(\mb{D}_n)|}=\dfrac{4p(n/2)}{\Bip(n)+3p(n/2)}<\dfrac{4p(n/2)}{(p(n/2))^2}=\dfrac{4}{p(n/2)}.$$
\end{proof}

\begin{theorem}\label{demi}
	For any integer $d$, we have
	$$\mathscr{L}(\mathscr{C}(\mathbb{D}_n, \mb{D}_k), g,d)=1.$$
\end{theorem}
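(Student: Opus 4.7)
The plan is to reduce Theorem \ref{demi} to Theorem \ref{mt}, exploiting the precise description of $\irr(\mathbb{D}_n)$ together with Lemma \ref{repdn}. The key observation I would use is that for $n > k$, the cycle type of $\iota_n(\theta) = g((\lambda, 1^{n-k}), \mu)$ has the odd part $1$ in its first component, so by Theorem \ref{conjDn} the $\mathbb{B}_n$-conjugacy class of $\iota_n(\theta)$ does not split in $\mathbb{D}_n$. Consequently, for every bipartition $(\alpha,\beta) \vDash n$ with $\alpha \neq \beta$, the character value of the restricted irrep $\pi^0_{\alpha,\beta}$ on $\iota_n(\theta)$ equals $\chi_{\pi_{\alpha,\beta}}(\iota_n(\theta))$, to which Lemma \ref{lemma1} applies verbatim.

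With this identification, I would recall from the proof of Theorem \ref{mt} the set $S$ of bipartitions $(\alpha,\beta) \vDash n$ satisfying $\upsilon_q(\mathfrak{f}_\alpha) + \upsilon_q(\mathfrak{f}_\beta) - \upsilon_q(2^k (n)_k) \geq r$. This condition is symmetric in $\alpha$ and $\beta$, and we already know $|S|/p_2(n) \to 1$. Since $\pi^0_{\alpha,\beta} \cong \pi^0_{\beta,\alpha}$, each unordered pair $\{\alpha,\beta\}$ with $(\alpha,\beta)\in S$ and $\alpha \neq \beta$ corresponds to a single $\pi^0$-type irrep of $\mathbb{D}_n$ whose character value on $\iota_n(\theta)$ has $q$-adic valuation at least $r$; the count of such unordered pairs is bounded below by $\tfrac{1}{2}(|S| - p(n/2))$.

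Dividing by $|\irr(\mathbb{D}_n)| \leq \tfrac{1}{2}(p_2(n) + 3 p(n/2))$ and using that $p(n/2)/p_2(n) \to 0$ (from inequality \eqref{eq1}, since $p_2(n) > p(n/2)^2$ and $p(n/2)\to\infty$) together with $|S|/p_2(n) \to 1$, the ratio tends to $1$; this yields the density for divisibility of $\chi_\pi(\iota_n(\theta))$ by the prime power $q^r$. Lemma \ref{repdn} simultaneously ensures that the uncontrolled $\pi^{\pm}_{\alpha,\alpha}$ representations form a vanishing fraction of $\irr(\mathbb{D}_n)$, so they do not affect the limit. Taking $r = \upsilon_q(d)$ for each prime $q$ dividing $d$ and intersecting the resulting full-density sets (by a complement union bound over the finitely many primes dividing $d$) delivers the theorem for arbitrary $d$.

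The main obstacle is genuinely just confirming that the $\mathbb{B}_n$-conjugacy class of $\iota_n(\theta)$ stays intact in $\mathbb{D}_n$; once this is checked (which is immediate from Theorem \ref{conjDn} whenever $n > k$), what remains is a routine transcription of the $\mathbb{B}_n$ argument, with small bookkeeping for the $2$-to-$1$ identification $\pi^0_{\alpha,\beta} \cong \pi^0_{\beta,\alpha}$ and for discarding the asymptotically negligible $\pi^{\pm}_{\alpha,\alpha}$ family.
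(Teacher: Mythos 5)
Your proposal is correct and follows essentially the same route as the paper's proof: discard the asymptotically negligible family $\pi^{\pm}_{\alpha,\alpha}$ via Lemma \ref{repdn}, identify the remaining characters of $\mathbb{D}_n$ with (unordered) pairs $\{\alpha,\beta\}$, $\alpha\neq\beta$, and import the density statement from Theorem \ref{mt}. The only cosmetic differences are that you invoke Theorem \ref{conjDn} to justify $\chi_{\pi^0_{\alpha,\beta}}(\iota_n(\theta))=\chi_{\pi_{\alpha,\beta}}(\iota_n(\theta))$ (which holds anyway, since the character of a restricted representation is the restriction of the character), and that you are more explicit than the paper about passing from prime powers $q^r$ to general $d$ by a union bound over the primes dividing $d$.
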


\begin{proof}
	For $n$ odd, the set of irreducible characters are
	$$\{\varrho_{\alpha,\beta}\mid (\alpha,\beta)\vDash n, \alpha\neq \beta\}.$$
	Therefore, in this case
\begin{equation}
\#\{\pi\in\irr(\mathbb{D}_n)\mid \chi_{\pi}(\iota_n(g))\,\text{is divisible by}\,\, d\}=\frac{1}{2}\#\{\pi\in\irr(\mathbb{B}_n)\mid \chi_{\pi}(\iota_n(g))\,\text{is divisible by}\,\, d\}.
\end{equation}	
So the result follows from Theorem \ref{mt}.
	
	For $n$ even, the set of irreducible characters are 
	$$\{\varrho_{\alpha,\beta}\mid (\alpha,\beta)\vDash n, \alpha\neq \beta\}\sqcup \{\chi_{\alpha,\alpha}^{\pm}\mid \alpha\vdash n/2\}.$$

 Lemma \ref{repdn} shows that in the computation of $\mathscr{L}(\mathscr{C}(\mathbb{D}_n), g,d)$ we can ignore the subset $\{\pi^{\pm}_{\alpha,\alpha}\mid \alpha\vdash n/2\}$ of $\irr(\mb D_n)$. Towards computing the limit we have
	
	\begin{align*}
		\dfrac{\#\{\varphi\in\irr(\mathbb{D}_n)\mid \chi_{\varphi}(\theta)\,\,\text{is not divisible by $d$}\}}{|\irr(\mathbb{D}_n)|}&=\dfrac{\#\{(\alpha,\beta)\vDash n\mid \alpha\neq \beta, \varrho_{\alpha,\beta}(\theta)\,\,\text{is not divisible by $d$}\}}{\frac{1}{2}\left(p_2(n)+3p(n/2)\right)}\\
		& < \dfrac{2\#\{(\alpha,\beta)\vDash n\mid \alpha\neq \beta, \varrho_{\alpha,\beta}(\theta)\,\,\text{is not divisible by $d$}\}}{p_2(n)}\\
		&=\dfrac{\#\{(\alpha,\beta)\vDash n\mid \pi_{\alpha,\beta}\in \irr(\mb{B}_n),  \chi_{\alpha,\beta}(\theta)\,\,\text{is not divisible by $d$}\}}{p_2(n)}.
	\end{align*}

As $n$ approaches infinity, we obtain
\begin{align*}
1-\mathscr{L}(\mathscr{C}(\mathbb{D}_n, \mb{D}_k), \theta,d)<1-\mathscr{L}(\mathscr{C}(\mathbb{B}_n, \mb{B}_k), \theta,d).
\end{align*}
Theorem \ref{mt} asserts that $\mathscr{L}(\mathscr{C}(\mathbb{D}_n, \mb{D}_k), g,d)=1$.
\end{proof}

\section{Dihedral Groups}\label{dihedralgroups}

Finally we consider the family of dihedral Groups $D_m$ (type $I_2(m)$). We have 
$$D_m=\langle r,s\mid r^m=s^2=1,  srs=r^{-1}\rangle.$$
Let $\langle r \rangle$ denote the subgroup of $D_m$ generated by the element $r$. It gives the coset decomposition $D_m=\langle r \rangle \cup s \langle r \rangle$. Geometrically the group $D_m$ can be described as the group of isometries of the regular $m$-gon.

\subsection{Irreducible Representations of $D_m$}
The dimensions of the irreducible 
representations of $D_m$ are at most $2$.
For $m$ even, there are four linear characters
\begin{align*}
	\mathbb{1}: & \quad(r,s)\rightarrow (1,1),\\
	\chi_r: & \quad (r,s)\rightarrow (-1,1),\\
	\chi_s: & \quad (r,s)\rightarrow (1,-1),\\
	\chi_{rs}: & \quad (r,s)\rightarrow (-1,-1).
\end{align*}

The irreducible $2$ dimensional representations $\sigma_k:D_m\to \mathrm{GL}_2(\mathbb{R})$
are given by 
$$\sigma_k(r)=\begin{pmatrix}
	\cos\theta_k & -\sin\theta_k\\
	\sin\theta_k & \cos\theta_k
\end{pmatrix},  
\qquad
\sigma_k(s)=\begin{pmatrix}
	0 & 1 \\
	1 & 0
\end{pmatrix},
$$
where $\theta_k=\frac{2\pi k}{m}$ and $1\leq k\leq m/2-1$.  When $m$ is odd, $D_m$ has two linear characters $\mathbb{1}$ and $\chi_s$. The two dimensional irreducible representations are $\sigma_k$, where $1\leq k\leq (m-1)/2$. 

Consider the chain of Dihedral groups

\begin{center}
	\begin{tikzpicture}
		\node (A1) at (-.5,0) {$\mathscr{C}(D_{m_n},D_m)$};
		\node (A2) at (.6,0) {$:$};
		\node (A3) at (1,0) {$D_m$};
		\node (A4) at  (2.3,0) {$D_{m_1}$};
		\node (A5) at  (3.6,0) {$D_{m_2}$};
		\node (A6) at  (4.9,0) {$\cdots$};		
		\node (A7) at  (6.2,0) {$D_{m_n}$};		
		\node (A8) at  (7.8,0) {$\cdots$};
		\draw[->](A3) to node [above]{$i_1$} (A4);
		\draw[->](A4) to node [above]{$i_2$} (A5);
		\draw[->](A5) to node [above]{$i_3$} (A6);
		\draw[->](A6) to node [above]{$i_n$} (A7);
		\draw[->](A7) to node [above]{$i_{n+1}$} (A8);
	\end{tikzpicture}
\end{center}
where $m_i$ divides $m_{i+1}$. By the abuse of notation, we denote the generators of $D_{m_k}$ by $r$ and $s$ for all $k\in \mb{N}$. The inclusion maps $i_k: D_{m_{k-1}}\to D_{m_k}$ is given by $i_k(r^ls^j)=r^{cl}s^j$, where $c=\frac{m_k}{m_{k-1}}$, $0\leq l\leq m-1$ and $j\in \{0,1\}$. 
Geometrically one can visualize the chain as follows. Let $\Delta_k$ denote a regular $k$-gon. One can consider $\Delta_k$ as the convex polygon with vertices as the $k$-th roots of unity.
Then $D_{m_k}$ is the group of isometries of $\Delta_{m_k}$. We take the natural embedding of $\Delta_{m_{k-1}}$ inside $\Delta_{m_k}$. For $g\in D_{m_{k-1}}$, let $i_k(g)\in D_{m_{k}}$ be the extension of $g$.

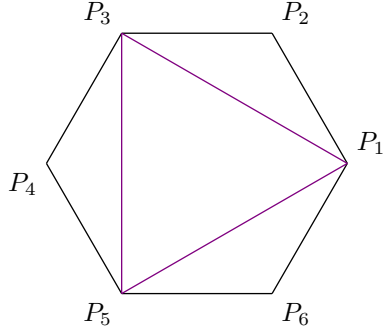
\begin{figure}
	\begin{center}
		\begin{tikzpicture}[scale=2, line width=.5pt]
			
			\foreach \i in {1,...,6} {
				\coordinate (H\i) at ({cos(60*\i)}, {sin(60*\i)});
			}
			
			\draw[black]      (H1) -- (H2);
			\draw[black]      (H2) -- (H3);
			\draw[black]     (H3) -- (H4);
			\draw[black]    (H4) -- (H5);
			\draw[black]       (H5) -- (H6);
			\draw[black]    (H6) -- (H1);
			\draw[violet]    (H6) -- (H2);
			\draw[violet]    (H6) -- (H4);
			\draw[violet]    (H2) -- (H4);
			
			\node at (H1) [above right] {$P_2$};
			\node at (H2) [above left] {$P_3$};
			\node at (H3) [below left] {$P_4$};
			\node at (H4) [below left] {$P_5$};
			\node at (H5) [below right] {$P_6$};
			\node at (H6) [above right] {$P_1$};
			
		\end{tikzpicture}
	\end{center}
	\caption{The figure shows an embedding of $\Delta_3=P_1P_3P_5$ inside $\Delta_6$.}
\end{figure}

\subsection{Divisibility of Character Values}

Let $\mathcal{O}$ denote the ring of integers in $\overline{\mathbb{Q}}$, the algebraic closure of $\mathbb{Q}$. Note that the character values of $D_m$ may not be rational integers. But being a sum of roots of unity they take values in the ring of integers $\mathcal{O}$. In this case, we say a positive integer $d$ divides the character value $\chi(g)$ if $\chi(g)/d\in \mathcal{O}$, where $g\in D_m$.

Take $\mathscr{S}_n=\{1\leq k< n/2\mid \gcd(k,n)=1\}$. From \cite{gurtas} we get that the minimal polynomial of 
$\cos\frac{2\pi}{n}$ is
\begin{equation}\label{minpol}
\widehat{\Psi}_n(x)=\prod_{k\in \mathscr{S}_n}(x-\cos(2\pi k/n)).
\end{equation}

The roots of $\widehat{\Psi}_n(x)$ are Galois conjugate to each other. Therefore $\cos(2\pi k/n)$ is Galois conjugate to $\cos(2\pi/n)$, whenever $\gcd(k,n)=1$.
Let $\zeta_n=e^{\frac{2\pi i}{n}}$ be the primitive root of unity. We write $\alpha_k(n)=\zeta_n^k+\zeta_n^{-k}$. Observe that $\chi_{\sigma_k}(r)=\alpha_k(m)$, where $r\in D_m$. Next we present a proof of an elementary result for the convenience of the reader.

\begin{lemma}\label{lemmaai}
We have $\alpha_1(n)\in \mb{Q}$ if and only if $n\in \{1,2,3,4,6\}$.
\end{lemma}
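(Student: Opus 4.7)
The plan is to connect $\alpha_1(n) = 2\cos(2\pi/n)$ to the degree of its minimal polynomial over $\mathbb{Q}$, and then use the classical classification of integers $n$ with small Euler totient $\varphi(n)$. First I would dispose of the five claimed values by direct computation: $\alpha_1(1)=2$, $\alpha_1(2)=-2$, $\alpha_1(3)=-1$, $\alpha_1(4)=0$, $\alpha_1(6)=1$, all plainly rational.

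For the converse direction, the key observation is that $\alpha_1(n)\in\mathbb{Q}$ if and only if $\cos(2\pi/n)\in\mathbb{Q}$, so I would analyze the minimal polynomial $\widehat{\Psi}_n(x)$ of $\cos(2\pi/n)$ recorded in \eqref{minpol}. Its degree equals $|\mathscr{S}_n|$, and a standard count shows $|\mathscr{S}_n|=\varphi(n)/2$ for $n\ge 3$ (since $\mathscr{S}_n$ consists of half of the residues coprime to $n$, pairing $k$ with $n-k$). Thus $\cos(2\pi/n)\in\mathbb{Q}$ for $n\ge 3$ forces $\varphi(n)/2=1$, i.e.\ $\varphi(n)=2$.

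It remains to enumerate the $n$ with $\varphi(n)\le 2$. For $n=1,2$ one has $\varphi(n)=1$; for $n\ge 3$ the equation $\varphi(n)=2$ is solved precisely by $n\in\{3,4,6\}$, which follows from the multiplicativity of $\varphi$ together with $\varphi(p^k)=p^{k-1}(p-1)$: any prime $p\ge 5$ gives $\varphi(p)\ge 4$, any prime power $p^k$ with $p^k\ge 8$ gives $\varphi(p^k)\ge 4$, and any product of two distinct prime powers each with $\varphi\ge 2$ already gives $\varphi\ge 4$. Combining the two ranges yields the list $\{1,2,3,4,6\}$.

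The only mild subtlety is justifying that $\widehat{\Psi}_n$ is genuinely the minimal polynomial of $\cos(2\pi/n)$ (which is what the reference \cite{gurtas} supplies, and which can also be derived from the fact that $\mathbb{Q}(\cos(2\pi/n))$ is the maximal totally real subfield of the cyclotomic field $\mathbb{Q}(\zeta_n)$, of degree $\varphi(n)/2$ over $\mathbb{Q}$ for $n\ge 3$); everything else is a short computation.
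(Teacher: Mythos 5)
Your proof is correct, but it takes a genuinely different route from the paper. The paper's argument is the classical ``Niven's theorem'' trick: $\alpha_1(n)=\zeta_n+\zeta_n^{-1}$ is an algebraic integer, so if it is rational it is a rational integer; since $|2\cos(2\pi/n)|\le 2$ this forces $\cos(2\pi/n)\in\{0,\pm\tfrac12,\pm1\}$, and reading off which $n$ produce these values gives exactly $\{1,2,3,4,6\}$. You instead compute the degree $[\mathbb{Q}(\cos(2\pi/n)):\mathbb{Q}]=|\mathscr{S}_n|=\varphi(n)/2$ for $n\ge 3$ via the minimal polynomial $\widehat{\Psi}_n$ of \eqref{minpol}, reduce rationality to $\varphi(n)\le 2$, and enumerate. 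The paper's route is shorter and needs only the elementary fact that a rational algebraic integer is a rational integer (which it in any case must invoke), while yours leans on the nontrivial input that $\widehat{\Psi}_n$ really is the minimal polynomial --- equivalently that $\mathbb{Q}(\cos(2\pi/n))$ is the degree-$\varphi(n)/2$ totally real subfield of $\mathbb{Q}(\zeta_n)$. In exchange your argument buys more: it immediately gives the exact degree of every $\cos(2\pi k/n)$, not just a rationality criterion, and it meshes with the machinery the paper has already set up around \eqref{minpol} and Lemma \ref{lemmaai2}. One small point of care: in your totient enumeration the factor $2$ has $\varphi(2)=1$, so the case $n=2m$ with $m$ odd must be reduced to $\varphi(m)=2$ separately (this is how $n=6$ arises); your sketch glosses over this, but the classification $\varphi(n)=2\iff n\in\{3,4,6\}$ is standard and the gap is cosmetic.
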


\begin{proof} 	
Note that $\alpha_1(n)$ is an algebraic integer. Therefore $\alpha_1(n)\in \mb{Q}$ implies $\alpha_1(n)\in \mb{Z}$. Moreover, $\alpha_1=2\cos \frac{2\pi}{n}$. Thus $\alpha_1\in \mb{Z}$ if and only if $\cos \frac{2\pi}{n}$ is an integral multiple of $1/2$. This is true if and only if $n\in \{1,2,3,4,6\}$.
\end{proof}

\begin{lemma}\label{lemmaai2}
For a positive integer $d\geq2$, if $\alpha_1(n)\notin \mb{Q}$ then $\alpha_1(n)/d\notin \mathcal{O}$.
\end{lemma}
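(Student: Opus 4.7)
My plan is to write down the minimal polynomial of $\alpha_1(n)/d$ explicitly by scaling, and to show that its constant term violates a necessary divisibility condition. By Lemma \ref{lemmaai} the hypothesis $\alpha_1(n)\notin\mb{Q}$ rules out $n\in\{1,2,3,4,6\}$, so the degree $D:=|\mathscr{S}_n|$ of the minimal polynomial $\widehat{\Psi}_n(x)$ of $\cos(2\pi/n)$ satisfies $D\geq 2$. Let $\Psi_n(y):=2^D\widehat{\Psi}_n(y/2)=\prod_{k\in\mathscr{S}_n}(y-\alpha_k(n))$ be the monic minimal polynomial of $\alpha_1(n)$; because $\alpha_1(n)$ is an algebraic integer we have $\Psi_n\in\mb{Z}[y]$. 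Since $\alpha_1(n)=d\cdot(\alpha_1(n)/d)$, the element $\alpha_1(n)/d$ generates the same extension of $\mb{Q}$, so its minimal polynomial is
$$g(y)=d^{-D}\Psi_n(dy)=\prod_{k\in\mathscr{S}_n}\left(y-\frac{\alpha_k(n)}{d}\right).$$

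Suppose toward a contradiction that $\alpha_1(n)/d\in\mathcal{O}$. Then $g(y)\in\mb{Z}[y]$, so in particular its constant term $g(0)=(-1)^D\prod_{k\in\mathscr{S}_n}\alpha_k(n)/d^D$ is a rational integer; equivalently, $d^D$ divides $\prod_{k\in\mathscr{S}_n}\alpha_k(n)$ in $\mb{Z}$. The crucial size bound comes from the fact that for every $k\in\mathscr{S}_n$ one has $1\leq k<n/2$, so $0<2\pi k/n<\pi$ and hence $|\alpha_k(n)|=2|\cos(2\pi k/n)|<2$ strictly. Taking the product of these $D$ factors gives
$$\left|\prod_{k\in\mathscr{S}_n}\alpha_k(n)\right|<2^D\leq d^D.$$
Thus the only integer of strictly smaller absolute value divisible by $d^D$ is zero, forcing $\prod_{k\in\mathscr{S}_n}\alpha_k(n)=0$. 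But then some $\alpha_k(n)=0$, so $n=4k$ with $\gcd(k,n)=1$, which gives $(k,n)=(1,4)$ and $\alpha_1(n)=0\in\mb{Q}$, contradicting the hypothesis.

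The key delicate point is the \emph{strict} inequality $|\alpha_k(n)|<2$; without strictness one would only obtain $|\prod_k\alpha_k(n)|\leq 2^D$, which is compatible with divisibility by $2^D$ when $d=2$ and would defeat the argument. Strictness holds precisely because $\mathscr{S}_n$ excludes both $k=0$ and $k=n/2$, so the arguments $2\pi k/n$ avoid the endpoints $0$ and $\pi$. Once this is noted, everything else is a routine manipulation of how minimal polynomials transform under the substitution $y\mapsto y/d$.
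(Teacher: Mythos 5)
Your proof is correct, and it takes a genuinely different route from the paper's. The paper invokes \cite[Proposition 2.16]{wash} to identify $\mathbb{Z}[\alpha_1(n)]$ as the full ring of integers of $\mathbb{Q}(\alpha_1(n))$, and then argues that $\alpha_1(n)/d$ lying in that ring yields a polynomial relation that cannot be made simultaneously monic and integral. You avoid that input entirely: you scale the minimal polynomial $\widehat{\Psi}_n$ from \eqref{minpol} to write down the minimal polynomial of $\alpha_1(n)/d$ explicitly, and note that its integrality would force $d^D$ to divide the rational integer $\prod_{k\in\mathscr{S}_n}\alpha_k(n)$, whose absolute value is strictly below $2^D\leq d^D$ because each conjugate $2\cos(2\pi k/n)$ with $0<2\pi k/n<\pi$ has absolute value strictly less than $2$; the product must then vanish, and $\cos(2\pi k/n)=0$ with $\gcd(k,n)=1$ forces $(k,n)=(1,4)$, contradicting $\alpha_1(n)\notin\mb{Q}$. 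What your approach buys is elementarity and self-containedness: it uses only the statement of \eqref{minpol}, the fact that the minimal polynomial of an algebraic integer is integral, and a size estimate, rather than the monogenicity of the ring of integers of the real cyclotomic field; you also correctly flag that the strictness of $|\alpha_k(n)|<2$ is what makes the $d=2$ case go through. The remark that $D\geq 2$ is harmless but not needed; the argument works for any $D\geq 1$, and $D=1$ is already excluded by the hypothesis via Lemma \ref{lemmaai}.
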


\begin{proof}
We start with the assumption that $\alpha_1(n)\notin \mb{Q}$.	
From \cite[Proposition $2.16$]{wash} we know that $\mathbb{Z}[\alpha_1(n)]$ is the ring of integers for $\mathbb{Q}[\alpha_1(n)]$. Let $e$ be the degree of the minimal polynomial of $\alpha_1(n)$. Observe that $\alpha_1(n)\in \mathcal{O}$. Therefore, if $j>e$ by an inductive process one has  $\alpha_1(n)^j=\sum_{i=1}^ec_i(\alpha_1(n))^i$, where $c_i\in \mb{Z}$.
 If $\alpha_1(n)/d\in \mathcal{O}$ then 
$$\alpha_1/d=\sum_{i=1}^b a_i(\alpha_1(n))^i,$$
where $a_i\in \mathbb{Z}$ and and $b\leq e$. So $\alpha_1$ satisfies the polynomial $f(x)=\sum_{i=1}^b a_ix^i-x/d$. Therefore we must have $b=e$. But $f(x)$ can't be transformed into a monic and integral polynomial simultaneously by multiplying it with a rational number.
Therefore $\alpha_1(n)/d$ is not an algebraic integer.
\end{proof}

For a positive integer $d\geq 2$, we have 
$$\alpha_k(n)/d=\frac{2}{d}\cos\left(\frac{2\pi k}{n}\right)=\frac{2}{d}\cos\left(\frac{2\pi k'}{n'}\right),$$	
where $k'=k/\gcd(n,k)$ and $n'=n/\gcd(n,k)$. Since $\gcd(k',n')=1$, $\alpha_k(n)/d$ is Galois conjugate to $\alpha_1(n')/d$. Moreover, we use the fact that Galois conjugate of an algebraic integer is an algebraic integer. In particular, we have $\alpha_k(n)/d\in \mathcal{O}$ if and only if $\alpha_1(n')/d\in \mathcal{O}$.

\begin{proposition} \label{propai1}
	Let $k$ and $n$ be positive integers. Then TFAE:
	\begin{enumerate}
	\item 
	$\alpha_k(n)$ is divisible by $2$
	\item 
	$k/n$ is an integral multiple of $1/4$.
	\item 
	$\alpha_k(n)$ takes values from the set $\{2,0,-2\}$.
	\end{enumerate}
\end{proposition}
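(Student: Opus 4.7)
My plan is to prove the circular chain $(2) \Rightarrow (3) \Rightarrow (1) \Rightarrow (2)$. The implications $(2) \Rightarrow (3)$ and $(3) \Rightarrow (1)$ are essentially immediate: if $k/n = j/4$ for some $j \in \mathbb{Z}$, then $\alpha_k(n) = 2\cos(j\pi/2)$, which lies in $\{2, 0, -2\}$ depending on the residue of $j$ modulo $4$; and each element of $\{2, 0, -2\}$ is obviously divisible by $2$ in $\mathcal{O}$.

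The substance of the proposition is $(1) \Rightarrow (2)$. Here I will leverage the Galois-conjugacy reduction already recorded just before the proposition: writing $n' = n/\gcd(n,k)$ and $k' = k/\gcd(n,k)$, the element $\alpha_k(n)/2$ is Galois conjugate to $\alpha_1(n')/2$, and Galois conjugates of algebraic integers are themselves algebraic integers. So assuming $\alpha_k(n)/2 \in \mathcal{O}$, I obtain $\alpha_1(n')/2 \in \mathcal{O}$. The contrapositive of Lemma \ref{lemmaai2} (applied with $n$ replaced by $n'$ and $d=2$) then forces $\alpha_1(n') \in \mathbb{Q}$, and Lemma \ref{lemmaai} restricts $n'$ to $\{1,2,3,4,6\}$. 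For each such $n'$ I can compute $\cos(2\pi/n') = \alpha_1(n')/2$ explicitly; it is a rational number that must also be an algebraic integer, hence a rational integer in $[-1,1]$. This rules out $n' = 3$ (value $-1/2$) and $n' = 6$ (value $1/2$), leaving $n' \in \{1,2,4\}$.

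To finish, I need to translate $n' \mid 4$ into the divisibility statement $4k/n \in \mathbb{Z}$. Since $\gcd(n',k')=1$ by construction, the condition $n \mid 4k$ becomes $n' \mid 4k'$, which reduces to $n' \mid 4$; this holds precisely for $n' \in \{1,2,4\}$, matching exactly what the previous paragraph yielded. The main subtlety I foresee is purely bookkeeping: Lemma \ref{lemmaai2} is stated for $\alpha_1(n)$ but I apply it with $n$ replaced by $n'$, so I should explicitly note that its proof (which rests only on the fact that $\mathbb{Z}[\alpha_1(n')]$ is the ring of integers of $\mathbb{Q}[\alpha_1(n')]$, together with minimal-polynomial considerations) runs verbatim. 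Beyond this, the argument is a short chain of substitutions, with no hidden obstacle.
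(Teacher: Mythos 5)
Your proposal is correct and follows essentially the same route as the paper: the Galois-conjugacy reduction to $\alpha_1(n')/2$, the contrapositive of Lemma \ref{lemmaai2} to force $\alpha_1(n')\in\mathbb{Q}$, Lemma \ref{lemmaai} to restrict $n'$ to $\{1,2,3,4,6\}$, and explicit evaluation to cut this down to $\{1,2,4\}$, which translates via $\gcd(k',n')=1$ into $k$ being an integral multiple of $n/4$. Your organization as a circular chain $(2)\Rightarrow(3)\Rightarrow(1)\Rightarrow(2)$ is a minor (and arguably cleaner) repackaging of the paper's argument, not a different method.
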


\begin{proof}

Observe that  $\alpha_k(n)/2\in \mathcal{O}$ if and only if $\alpha_1(n')/2\in \mathcal{O}$. If $\alpha_1(n')/2\in \mathcal{O}$,  Lemma \ref{lemmaai2}  implies $\alpha_1(n')\in \mb{Q}$. Further, Lemma \ref{lemmaai} shows that the possible values of $n'$ are $\{1,2,3,4,6\}$. Evaluating $\alpha_1(n')/2$ for these values one obtains $\alpha_1(n')/2\in \mathcal{O}$ if and only if $n'\in \{1,2,4\}$. An equivalent condition is that $k$ is an integral multiple of $n/4$. Elementary calculation shows that $\alpha_k(n)$ takes the values $\{2,0,-2\}$ when $k$ is an integral multiple of $n/4$.

\end{proof}

\begin{proposition}\label{propai2}
Let $k, d$ and $n$ be positive integers with $d>2$. Then TFAE:
\begin{enumerate}
\item 
$\alpha_k(n)$ is divisible by $d$.
\item 
$k/n$ is an odd multiple of $1/4$
\item 
$\alpha_k(n)=0$.
\end{enumerate}
\end{proposition}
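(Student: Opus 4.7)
The plan is to mirror the strategy used in Proposition \ref{propai1}, but now restricting attention to divisors $d > 2$, which will be much more restrictive. As before, I would begin by reducing the question to the primitive case via Galois conjugation: writing $n' = n/\gcd(n,k)$, the element $\alpha_k(n)/d$ is Galois conjugate to $\alpha_1(n')/d$, so $\alpha_k(n)/d \in \mathcal{O}$ if and only if $\alpha_1(n')/d \in \mathcal{O}$.

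Assuming $(1)$, I would then invoke Lemma \ref{lemmaai2} to conclude that $\alpha_1(n')$ must be rational (otherwise $\alpha_1(n')/d$ fails to be an algebraic integer for any $d \geq 2$), and then Lemma \ref{lemmaai} restricts the possibilities to $n' \in \{1,2,3,4,6\}$. At this point I would check each case directly: computing $\alpha_1(n')$ gives the values $2, -2, -1, 0, 1$ respectively, and the condition $\alpha_1(n')/d \in \mathbb{Z}$ under the hypothesis $d > 2$ rules out every case except $n' = 4$, where $\alpha_1(4) = 0$ and $0/d = 0$ is trivially integral. Thus $(1)$ forces $n/\gcd(n,k) = 4$, which translates to $\gcd(n,k) = n/4$ and $k = (n/4)\cdot j$ with $\gcd(j,4) = 1$, i.e., $j$ odd; this is precisely the statement that $k/n$ is an odd multiple of $1/4$, giving $(1) \Rightarrow (2)$.

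For $(2) \Rightarrow (3)$, if $k/n = (2j+1)/4$ for some integer $j$, then
\[
\alpha_k(n) = 2\cos\!\left(\frac{2\pi k}{n}\right) = 2\cos\!\left(\frac{(2j+1)\pi}{2}\right) = 0.
\]
The implication $(3) \Rightarrow (1)$ is immediate since $0/d = 0 \in \mathcal{O}$. This closes the cycle of implications.

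The only real subtlety, which I would flag as the main point requiring care, is the case analysis for $n' \in \{1,2,3,4,6\}$ combined with the hypothesis $d > 2$: this is exactly where the proposition diverges from Proposition \ref{propai1}, because for $d = 2$ the values $n' \in \{1, 2, 4\}$ all give $\alpha_1(n')/2 \in \mathbb{Z}$, while for $d > 2$ only $n' = 4$ survives (since $2/d, -2/d, -1/d, 1/d$ are not integers when $d > 2$). The rest is essentially bookkeeping, translating between the arithmetic condition on $n'$ and the divisibility condition on $k/n$.
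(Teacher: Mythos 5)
Your proposal is correct and follows the same route the paper intends: the paper's own proof of this proposition is a one-line appeal to "a similar argument as in the previous proposition," i.e.\ reduce to $\alpha_1(n')$ via Galois conjugation, apply Lemmas \ref{lemmaai2} and \ref{lemmaai} to restrict $n'$ to $\{1,2,3,4,6\}$, and observe that for $d>2$ only $n'=4$ survives. You have simply written out the details (including the case-check of the values $2,-2,-1,0,1$ and the closing implications $(2)\Rightarrow(3)\Rightarrow(1)$) that the paper leaves implicit.
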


\begin{proof}
For $d>2$, similar argument as in the previous proposition shows that $\alpha_k(n)/d\in \mathcal{O}$ if and only if $n'=4$. Equivalently, one has the condition that $k$ is an odd multiple of $n/4$. 
\end{proof}

\subsection{Results on Dihedral Groups}\label{di}
We now prove the results related to Dihedral groups.




\begin{proof}[Proof of Theorem \ref{dihedral}]	
First we consider the case when $m$ is even.
In that case, $|\irr(D_{m})|=m/2+3$. For a two dimensional representation $\sigma_k\in \irr({D}_m)$, we have $\chi_{\sigma_k}(r^l)=2\cos (2\pi lk/m)$. The required ratio $\cos (2\pi lk/m)\in \mathcal{O}$ when $lk/m$ is an integral multiple of $1/4$ (see Proposition \ref{propai1}). So we need $k=v\frac{m}{4l}$ where $v\in \mb{Z}$. We have $m$ divides $4lk$ if and only if $\frac{m}{\gcd(m,4l)}$ divides $\frac{4lk}{\gcd(m,4l)}$. Since $\frac{m}{\gcd(m,4l)}$ and $\frac{4l}{\gcd(m,4l)}$
are coprime, we have $m$ divides $4lk$ if and only if $\frac{m}{\gcd(m,4l)}$ divides $k$.


For $D_m$ one has
\begin{align*}
\#\{k\mid \chi_{\sigma_k}(r^l)\,\text{is divisible by}\,\, 2\} &= |\{k\mid \dfrac{m}{\gcd(m,4l)}\,\,\text{divides}\,\, k, 1\leq k\leq m/2-1\}|\\
&=\left\lfloor  \frac{m/2-1}{m/\gcd(m,4l)} \right\rfloor\\
&= \left\lfloor \frac{(m-2)\gcd(m,4l)}{2m}\right\rfloor.
\end{align*}

So we compute
\begin{align*}
\#\{k\mid \chi_{\sigma_k}(\iota_n(r^l))\,\text{is divisible by}\,\, 2\}&= |\{k\mid \dfrac{m_n}{\gcd(m_n,\frac{4m_nl}{m})}\mid k, 1\leq k\leq \frac{m_n}{2}-1\}|\\
&=|\{k\mid \dfrac{m_n}{\frac{m_n}{m}\gcd(m,4l)}\mid k, 1\leq k\leq \frac{m_n}{2}-1\}|\\ 
&=\left\lfloor \frac{(m_n-2)\gcd(m,4l )}{2m}\right\rfloor.
\end{align*}

We have
\begin{equation}\label{asymdi1}
\frac{\#\{\pi\in\irr(D_{m_n})\mid \chi_{\pi}(\iota_n(r^{l}))\,\text{is divisible by}\,\, 2\}}{|\irr(D_{m_n})|}= \dfrac{1}{(m_n)/2+3}\left\lfloor \frac{(m_n-2)\gcd(m,4l)}{2m}\right\rfloor
\end{equation}
From the property of the floor function we get the inequality
\begin{equation*}
\frac{(m_n-2)\gcd(m,4l)}{2m((m_n)/2+3)}-1<\dfrac{1}{(m_n)/2+3}\left\lfloor \frac{(m_n-2)\gcd(m,4l)}{2m}\right\rfloor\leq\frac{(m_n-2)\gcd(m,4l )}{2m((m_n)/2+3)}.
\end{equation*}

One computes

$$\underset{n\rightarrow \infty}{\lim}\frac{(m_n-2)\gcd(m,4l )}{2m((m_n)/2+3)}=\underset{n\rightarrow \infty}{\lim}\left\{\frac{\gcd(m,4l)}{m+\frac{6m}{m_n}}-\frac{2\gcd(m,4l)}{m(m_n+6)}\right\}=\dfrac{\gcd(m,4l)}{m}.$$

Taking limit of $n$ to infinity we obtain the desired result. The proof for the case when $m$ is odd follows similarly.
\end{proof}

\begin{theorem}\label{ratio1}
We have	
$$\mathscr{L}(\mathscr{C}(D_{m_n}, D_m),g,2)=1$$
if and only if $g$ is a reflection or $g\in H$, where
\[
H=
\begin{cases}
\{e\}, & \text{$m$ is odd},\\
Z(D_{m}), & m\equiv 2\pmod 4,\\
\langle r^{m/4}\rangle, & m\equiv 0\pmod 4.
\end{cases}
\]
\end{theorem}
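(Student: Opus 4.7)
The strategy is to split the element $g \in D_m$ into two cases according to whether it is a reflection or a rotation, treating reflections by a direct trace computation and reducing rotations to Theorem~\ref{dihedral}.

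\textbf{Reflection case.} The inclusion $i_n$ carries a reflection $sr^l \in D_m$ to the reflection $sr^{(m_n/m)l} \in D_{m_n}$, so $\iota_n(g)$ is still a reflection whenever $g$ is. Using the matrix description of $\sigma_k$ recalled just before the theorem, one computes
\[
\sigma_k(sr^j) = \sigma_k(s)\,\sigma_k(r^j) = \begin{pmatrix} \sin j\theta_k & \cos j\theta_k \\ \cos j\theta_k & -\sin j\theta_k \end{pmatrix},
\]
whose trace is $0$. Hence every two-dimensional irreducible character of $D_{m_n}$ vanishes at $\iota_n(g)$ and is therefore trivially divisible by $2$. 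Only the (at most four) linear characters take values $\pm 1$ at a reflection and so are not divisible by $2$. Since $|\irr(D_{m_n})|$ grows like $m_n/2$, the proportion of characters not divisible by $2$ is $O(1/m_n)\to 0$, so $\mathscr{L}(\mathscr{C}(D_{m_n},D_m),g,2)=1$.

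\textbf{Rotation case.} For $g=r^l$, Theorem~\ref{dihedral} gives $\mathscr{L}=\gcd(m,4l)/m$, which equals $1$ exactly when $m\mid 4l$. Put $d:=\gcd(m,4)$. Since $m/d$ and $4/d$ are coprime, $m\mid 4l \Leftrightarrow (m/d)\mid l$, so the set of admissible exponents $l$ modulo $m$ is exactly the cyclic subgroup $\langle r^{m/d}\rangle \le \langle r\rangle$ of order $d$. Reading off $d$ from $m\bmod 4$ gives: for $m$ odd, $d=1$ and the subgroup is $\{e\}$; for $m\equiv 2\pmod 4$, $d=2$ and the subgroup is $\langle r^{m/2}\rangle = Z(D_m)$; for $m\equiv 0\pmod 4$, $d=4$ and the subgroup is $\langle r^{m/4}\rangle$. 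These are precisely the three cases defining $H$, and together with the reflection case this exhausts all elements of $D_m$ and finishes the proof.

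\textbf{Expected obstacle.} There is no substantive difficulty: the reflection analysis reduces to a one-line trace computation, and the rotation analysis is a direct application of the already-proved Theorem~\ref{dihedral} combined with the elementary identification $\{l\in\mb Z : m\mid 4l\} = (m/\gcd(m,4))\mb Z$. The only mild care needed is to verify in each parity case for $m$ that $\langle r^{m/\gcd(m,4)}\rangle$ agrees with the subgroup $H$ as defined in the theorem statement, which is immediate from the order-$d$ description above.
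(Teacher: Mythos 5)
Your proposal is correct and follows essentially the same route as the paper: reflections are handled by observing that the two-dimensional characters vanish there while the at most four linear characters are asymptotically negligible, and rotations reduce to Theorem~\ref{dihedral} with the condition $m \mid 4l$ unpacked according to $m \bmod 4$. The only difference is cosmetic --- you make the trace computation and the identification $\{l : m \mid 4l\} = (m/\gcd(m,4))\mathbb{Z}$ explicit where the paper simply asserts the corresponding facts.
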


\begin{proof}
If $g$ is a reflection, then $g\in s\langle r \rangle$. In that case $\chi_{\pi}(\iota_n(g))=0$ unless $\dim\pi=1$. For the one dimensional representations $\pi$, $\chi_{\pi}(\iota_n(g))/2\notin \mathcal{O}$. Note that the number of one dimensional representations is at most $4$ for any $m_n$. Therefore they don't affect the asymptotic behavior of the ratio.

If $g\in \langle r\rangle$, then $\frac{\gcd(m,4l)}{m}=1$ if and only if $m$ divides $4l$. This gives $4l\in \{m, 2m, 3m, 4m\}$ as $1\leq l\leq m$. The theorem follows from this.
\end{proof}

Next, we study the ratio of the character values divisible by $d$, where $d>2$.

\begin{theorem}
For $d>2$,
\[
\mathscr{L}(\mathscr{C}(D_{m_n}, D_m),r^l,d)=\begin{cases}
\dfrac{\mathrm{gcd}(m, 4l)}{2m},& \text{if  $\upsilon_2(m)\geq 2,\,\, \upsilon_2(l)\leq \upsilon_2(m)-2$},\\
0, & \text{otherwise}.
\end{cases}
\]

\end{theorem}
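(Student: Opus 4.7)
The plan is to follow the structure of the proof of Theorem~\ref{dihedral}, but with Proposition~\ref{propai2} replacing Proposition~\ref{propai1}. The decisive new feature is that for $d>2$ the divisibility $d\mid\alpha_k(n)$ forces $\alpha_k(n)=0$, i.e.\ $k/n$ must be an \emph{odd} multiple of $1/4$; this is considerably more restrictive than the condition used in the $d=2$ case, and it is what produces both the factor $\tfrac{1}{2}$ in the answer and the extra $2$-adic constraint on $l$.

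First I would observe that for $\sigma_k\in\irr(D_{m_n})$ with $1\le k<m_n/2$, the iterated inclusion sends $r^l\in D_m$ to $r^{l\,m_n/m}\in D_{m_n}$, so
\[
\chi_{\sigma_k}(\iota_n(r^l))\;=\;2\cos\!\left(\frac{2\pi kl}{m}\right)\;=\;\alpha_{kl}(m),
\]
which depends on $k,l,m$ but \emph{not} on $m_n$. The one-dimensional characters of $D_{m_n}$ take values in $\{\pm 1\}$ on any power of $r$, and $\pm 1/d\notin\mathcal{O}$ for $d>2$; since there are at most four of them, they contribute $O(1)$ to numerator and denominator and cannot affect the limit.

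Next I would apply Proposition~\ref{propai2} to $\alpha_{kl}(m)$: it is divisible by $d$ iff $kl/m$ is an odd multiple of $1/4$, equivalently iff $4kl/m$ is an odd positive integer. Setting $g:=\gcd(m,4l)$, the divisibility $m\mid 4kl$ becomes $(m/g)\mid k$; writing $k=(m/g)\,t$ gives $4kl/m=(4l/g)\,t$. A short 2-adic computation yields $\upsilon_2(4l/g)=\max(0,\,\upsilon_2(l)+2-\upsilon_2(m))$, so $(4l/g)\,t$ is odd precisely when $\upsilon_2(l)\le\upsilon_2(m)-2$ (which automatically forces $\upsilon_2(m)\ge 2$) \emph{and} $t$ is odd. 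When the 2-adic condition fails, no two-dimensional $\sigma_k$ contributes, and combined with the previous paragraph the ratio tends to $0$; this yields the ``otherwise'' case.

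In the favourable regime $\upsilon_2(l)\le\upsilon_2(m)-2$, I would count odd integers $t$ in $\{1,\ldots,\lfloor g(m_n/2-1)/m\rfloor\}$: asymptotically there are $\sim g\,m_n/(4m)$ of them. Dividing by $|\irr(D_{m_n})|=m_n/2+O(1)$ and letting $n\to\infty$ gives the limit $g/(2m)=\gcd(m,4l)/(2m)$, as claimed. The principal obstacle is the 2-adic bookkeeping pinning down exactly when $(4l/g)\,t$ is odd; once that dichotomy is isolated, the counting step is entirely parallel to the proof of Theorem~\ref{dihedral}.
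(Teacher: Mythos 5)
Your proposal is correct and follows essentially the same route as the paper: reduce to Proposition \ref{propai2}, translate ``$4kl/m$ odd'' into the $2$-adic condition $\upsilon_2(l)\le\upsilon_2(m)-2$ (your $\max(0,\upsilon_2(l)+2-\upsilon_2(m))$ is the paper's $\upsilon_2(l)-\min\{\upsilon_2(m)-2,\upsilon_2(l)\}$), then count odd multiples of $m/\gcd(m,4l)$ up to $m_n/2-1$ and pass to the limit. The only cosmetic difference is that you make explicit the independence of $\chi_{\sigma_k}(\iota_n(r^l))$ from $m_n$ and the dismissal of the linear characters, which the paper leaves implicit.
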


\begin{proof}
From Proposition \ref{propai2} we have $\chi_{\sigma_k}(r^l)=\frac{2}{d}\cos(\frac{2\pi lk}{m})\in \mathcal{O}$ if and only if $kl/m$ is an odd multiple of $1/4$. Equivalently, $k=(2u+1)\frac{m}{4l}$, where $u\in \mathbb{Z}$. Note that an integer integral multiple of $m/4l$ is an integral multiple of $m/\gcd(m,4l)$. An odd multiple of $m/4l$ is an integer, if and only if $\frac{m}{\gcd (m,4l)}$ is an odd multiple of $m/4l$. An equivalent condition is $4l/\gcd(m,4l)$ is odd. 

Observe that if $\upsilon_2(m)<2$, then $4l/\gcd(m,4l)$ is even. Therefore $\frac{2}{d}\cos(\frac{2\pi lk}{m})\notin \mathcal{O}$. Therefore we consider $m\in \mb{Z}$ such that $\upsilon_2(m)\geq 2$. One easily computes $$\upsilon_2\left(4l/\gcd(m,4l)\right)=\upsilon_2\left(l/\gcd(m/4,l)\right)=\upsilon_2(l)-\min\{\upsilon_2(m)-2,\upsilon_2(l)\}.$$
Therefore, the integer $4l/\gcd(m,4l)$ is odd if and only if $\upsilon_2(l)\leq \upsilon_2(m)-2$.

We have
\begin{equation*}	
\begin{split}
\#\{k\mid \chi_{\sigma_k}(r^l)\,\text{is divisible by}\,\, d\}&=|\{k\mid \text{$k$ is an odd multiple of  }\,\dfrac{m}{\gcd(m,4l)}, 1\leq k\leq m/2-1\}| \\
&= \left\lfloor \frac{1}{2}\left\{\dfrac{(m-2)/2}{m/\gcd(m,4l)}+1\right\}\right\rfloor \\
&= \left\lfloor \dfrac{(m-2)\gcd(m,4l)+2m}{4m}\right\rfloor.
\end{split}
\end{equation*}
From the property of the floor function, one has

$$\dfrac{(m-2)\gcd(m,4l)+2m}{4m}-1<\left\lfloor \dfrac{(m-2)\gcd(m,4l)+2m}{4m}\right\rfloor\leq \dfrac{(m-2)\gcd(m,4l)+2m}{4m}.$$

Simplyfying the expression we have 
$$\dfrac{(m-2)\gcd(m,4l)}{4m}-1/2<\left\lfloor \dfrac{(m-2)\gcd(m,4l)+2m}{4m}\right\rfloor\leq \dfrac{(m-2)\gcd(m,4l)}{4m}+1/2.$$

For $D_{m_n}$ we obtain
\begin{equation}
\dfrac{(m_n-2)\gcd(m_n,4\frac{m_n}{m}l)}{4m_n}-1/2<\#\{k\mid \chi_{\sigma_k}(\iota_n(r^l))\,\text{is divisible by}\,\, d\}\leq \dfrac{(m_n-2)\gcd(m_n,4\frac{m_n}{m}l)}{4m_n}+1/2.
\end{equation}

This yields
\begin{align*}
\dfrac{(m_n-2)\gcd(m,4l)-2m}{4m(m_n/2+3)}<\frac{\#\{k\mid \chi_{\sigma_k}(\iota_n(r^l))\,\text{is divisible by}\,\, d\}}{|\irr(D_{m_n})|}\leq \dfrac{(m_n-2)\gcd(m,4l)+2m}{4m(m_n/2+3)}.
\end{align*}
The desired result emerges as $n$ tends to infinity.

\end{proof}

\begin{theorem}\label{d2}
For $d>2$,
\[\mathscr{L}(\mathscr{C}(D_{m_n}, D_m),g,d)=
\begin{cases}
	\frac{1}{2},& \text{if \, $\upsilon_2(m)\geq 2, \,g\in \{r^{m/4}, r^{3m/4}\}$},\\
	1, & \text{if \, $g$ is a reflection}.
\end{cases}
\]
\end{theorem}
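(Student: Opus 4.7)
The plan is to handle the two cases of the conclusion separately: the rotation case is a direct specialization of the preceding theorem, while the reflection case reduces to the vanishing of two-dimensional characters on reflections together with a simple counting argument.

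For $g = r^{m/4}$ (which is well-defined precisely because $\upsilon_2(m) \geq 2$), I would set $l = m/4$ and invoke the preceding theorem. Since $4l = m$, one has $\gcd(m, 4l) = m$, and $\upsilon_2(l) = \upsilon_2(m) - 2$ satisfies the hypothesis $\upsilon_2(l) \leq \upsilon_2(m) - 2$ with equality. The formula of the preceding theorem then yields $\mathscr{L} = \gcd(m,4l)/(2m) = 1/2$. The case $g = r^{3m/4}$ is identical: $\gcd(m, 3m) = m$, and $\upsilon_2(3m/4) = \upsilon_2(m) - 2$ since $3$ is odd.

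For the reflection case, I would first observe from the definition $i_k(r^l s^j) = r^{cl} s^j$ that $\iota_n$ maps reflections to reflections. Writing an arbitrary reflection in $D_{m_n}$ as $sr^j$ and using the explicit matrices defining $\sigma_k$, one computes
\begin{equation*}
\sigma_k(sr^j) = \begin{pmatrix} 0 & 1 \\ 1 & 0 \end{pmatrix} \begin{pmatrix} \cos(j\theta_k) & -\sin(j\theta_k) \\ \sin(j\theta_k) & \cos(j\theta_k) \end{pmatrix} = \begin{pmatrix} \sin(j\theta_k) & \cos(j\theta_k) \\ \cos(j\theta_k) & -\sin(j\theta_k) \end{pmatrix},
\end{equation*}
whose trace is zero. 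Hence $\chi_{\sigma_k}(\iota_n(g)) = 0$ is trivially divisible by $d$ for every two-dimensional irreducible $\sigma_k$. The one-dimensional characters take values $\pm 1$, which are not divisible by $d > 2$, but they number at most four regardless of $n$. Since $|\irr(D_{m_n})|$ grows linearly with $m_n$, the fraction of irreducible characters divisible by $d$ at $\iota_n(g)$ is at least $(|\irr(D_{m_n})| - 4)/|\irr(D_{m_n})|$, which tends to $1$ as $n \to \infty$.

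No serious obstacle arises; the heavy lifting is already contained in the preceding theorem. The main care required is bookkeeping: verifying the valuation hypothesis for the two special rotations, confirming that $\iota_n$ preserves the reflection/rotation dichotomy, and executing the elementary trace computation showing that $2$-dimensional dihedral characters vanish on reflections.
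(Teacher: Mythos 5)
Your proposal is correct and follows exactly the route the paper intends: the paper's proof of this theorem is only the remark that it is ``similar to that of Theorem \ref{ratio1}'', and your argument—specializing the preceding $d>2$ rotation formula at $l=m/4$ and $l=3m/4$ (checking $\gcd(m,4l)=m$ and $\upsilon_2(l)=\upsilon_2(m)-2$), and using the vanishing of the two-dimensional characters on reflections together with the boundedness of the number of linear characters—is precisely the omitted detail. In effect you have written out the proof the paper leaves implicit, so there is nothing to correct.
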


\begin{proof}
The proof is similar to that of Theorem \ref{ratio1}.
\end{proof}

\bibliographystyle{alpha}
\bibliography{mybib}	

\newcommand{\etalchar}[1]{$^{#1}$}
\begin{thebibliography}{Hum92}

\bibitem[Bou94]{bourlie}
Nicolas Bourbaki.
\newblock Lie groups and lie algebras.
\newblock {\em Elements of the History of Mathematics}, pages 247--267, 1994.

\bibitem[GK78]{kinch}
Ladnor Geissinger and Dennis Kinch.
\newblock Representations of the hyperoctahedral groups.
\newblock {\em Journal of algebra}, 53(1):1--20, 1978.

\bibitem[GPS19]{gps}
Jyotirmoy Ganguly, Amritanshu Prasad, and Steven Spallone.
\newblock On the divisibility of character values of the symmetric group.
\newblock {\em electronic journal of cmbinatorics}, 2019.

\bibitem[Gur16]{gurtas}
Yusuf~Z Gurtas.
\newblock The minimal polynomial of cos (2$\pi$/n).
\newblock {\em Communications of the Korean Mathematical Society},
  31(4):667--682, 2016.

\bibitem[Hum92]{hum}
James~E Humphreys.
\newblock {\em Reflection groups and Coxeter groups}.
\newblock Number~29. Cambridge university press, 1992.

\bibitem[Mac71]{macd}
Ian~G Macdonald.
\newblock On the degrees of the irreducible representations of symmetric
  groups.
\newblock {\em Bulletin of the London Mathematical Society}, 3(2):189--192,
  1971.

\bibitem[Mac98]{macdonald}
Ian~Grant Macdonald.
\newblock {\em Symmetric functions and Hall polynomials}.
\newblock Oxford university press, 1998.

\bibitem[MM11]{musili}
C~Musili and C~Musili.
\newblock {\em Representations of finite groups}.
\newblock Hindustan Book Agency, 2011.

\bibitem[Ols93]{Olb}
J\o rn~B. Olsson.
\newblock {\em Combinatorics and representations of finite groups}, volume~20
  of {\em Vorlesungen aus dem Fachbereich Mathematik der Universit\"at GH Essen
  [Lecture Notes in Mathematics at the University of Essen]}.
\newblock Universit\"at Essen, Fachbereich Mathematik, Essen, 1993.

\bibitem[S{\etalchar{+}}77]{ser}
Jean-Pierre Serre et~al.
\newblock {\em Linear representations of finite groups}, volume~42.
\newblock Springer, 1977.

\bibitem[SS24]{ss}
V.~Shah and S.~Spallone.
\newblock The divisibility of gl(n,q) character values.
\newblock {\em https://arxiv.org/pdf/2408.14046v2}, 2024.

\bibitem[Tou21]{tout}
O~Tout.
\newblock The center of the wreath product of symmetric group algebras.
\newblock {\em Algebra and Discrete Mathematics}, 31(2), 2021.

\bibitem[Was12]{wash}
Lawrence~C Washington.
\newblock {\em Introduction to cyclotomic fields}, volume~83.
\newblock Springer Science \& Business Media, 2012.

\end{thebibliography}
	
\end{document}